\pgfplotsset{compat=1.14}
\newtheorem{theorem}{Theorem}[section]
\theoremstyle{remark}
\newtheorem*{remark}{Remark}
\theoremstyle{definition}
\newtheorem{definition}{Definition}[section]
\theoremstyle{assumption}
\newtheorem{assumption}{Assumption}
\theoremstyle{example}
\newtheorem{example}{Example}
\DeclareMathAlphabet{\mathsf}{OT1}{\sfdefault}{m}{n}
\SetMathAlphabet{\mathsf}{bold}{OT1}{\sfdefault}{b}{n}
\DeclarePairedDelimiter\abs{\lvert}{\rvert}%
\DeclarePairedDelimiter\norm{\lVert}{\rVert}%
\let\oldabs\abs
\def\abs{\@ifstar{\oldabs}{\oldabs*}}
\let\oldnorm\norm
\def\norm{\@ifstar{\oldnorm}{\oldnorm*}}
\newcommand{\f}{\mathsf{f}}
\newcommand{\fone}{\f^{\{1\}}}
\newcommand{\fp}{\f^{\{p\}}}
\newcommand{\ftwo}{\f^{\{2\}}}
\newcommand{\g}{\mathsf{g}}
\newcommand{\gone}{\g^{\{1\}}}
\newcommand{\gp}{\g^{\{p\}}}
\newcommand{\gtwo}{\mathsf{\g}^{\{2\}}}
\newcommand{\Zero}{\mathsf{0}}
\newcommand{\Id}{\mathsf{I}}
\newcommand{\J}{\mathsf{J}}
\newcommand{\Jone}{\J^{\{1\}}}
\newcommand{\Jtwo}{\mathsf{\J}^{\{2\}}}
\newcommand{\landauO}{\mathcal{O}}
\newcommand{\Lb}{\mathsf{L}}
\newcommand{\Lone}{\mathsf{L}^{\!\{1\}}}
\newcommand{\Lp}{\mathsf{L}^{\{p\}}}
\newcommand{\Ltwo}{\mathsf{L}^{\!\{2\}}}
\newcommand{\one}{{\{1\}}}
\newcommand{\pfrac}[2]{\frac{\partial #1}{\partial #2}}
\newcommand{\tn}{t_n}
\newcommand{\two}{{\{2\}}}
\newcolumntype{P}[1]{>{\centering\arraybackslash\vspace{-1ex}}m{#1}<{\vspace*{-1ex}}}
\begin{document}

\begin{frontmatter}

\title{Efficient implementation of partitioned stiff exponential Runge-Kutta methods}
\author[aff1]{Mahesh Narayanamurthi}
\ead{maheshnm@vt.edu}
\author[aff1]{Adrian Sandu}
\ead{sandu@cs.vt.edu}
\address[aff1]{Computational Science Laboratory, Department of Computer Science, Virginia Tech, Blacksburg, VA 24060}

\includepdf[landscape=false,pages=-]{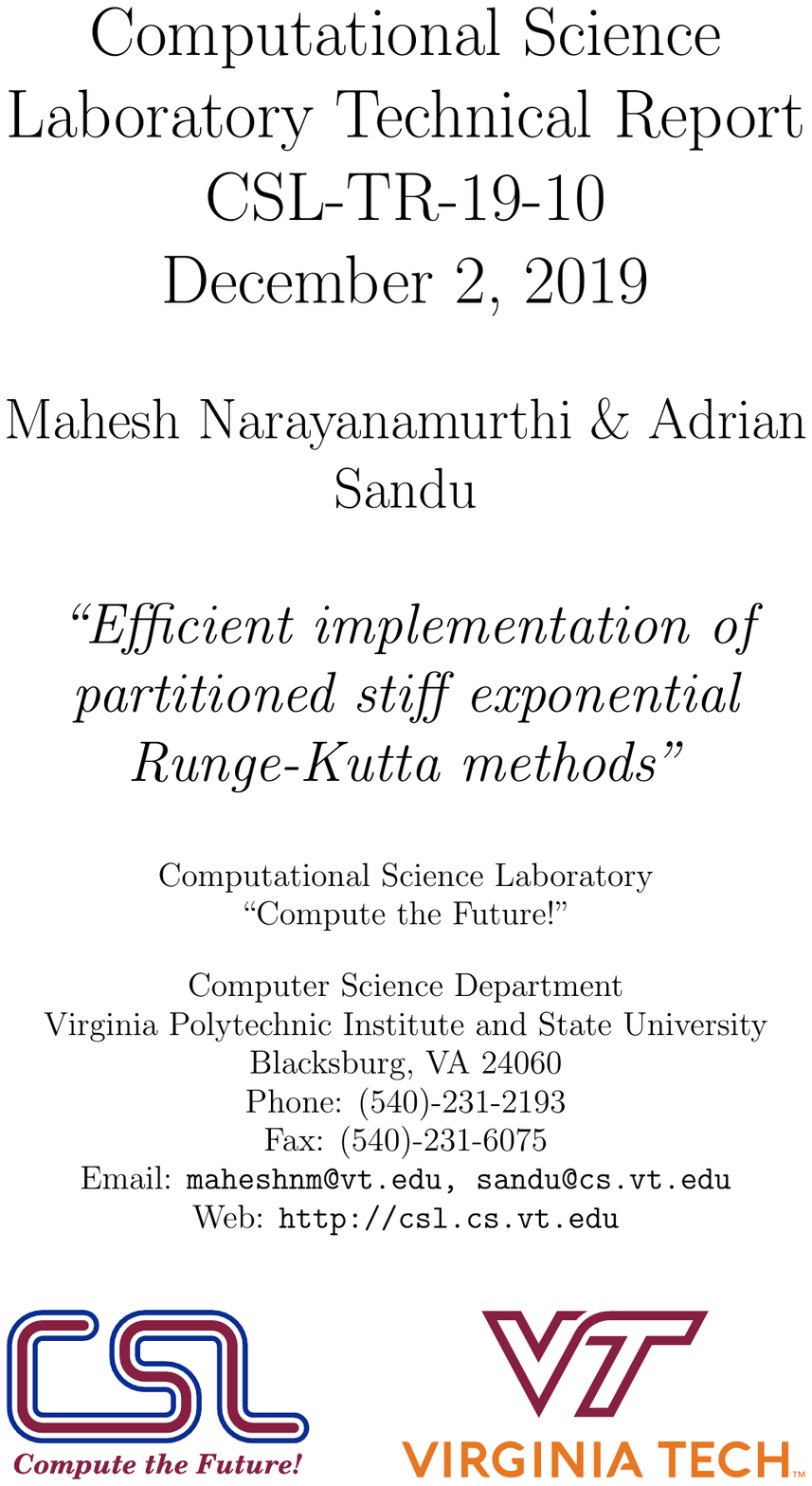}

\begin{abstract}

Multiphysics systems are driven by multiple processes acting simultaneously, and their simulation leads to partitioned systems of differential equations. This paper studies the solution of partitioned systems of differential equations using exponential Runge-Kutta methods. We propose specific multiphysics implementations of exponential Runge-Kutta methods satisfying stiff order conditions that were developed in [Hochbruck et al.,  SISC, 1998] and [Luan and Osterman, JCAM, 2014]. We reformulate stiffly--accurate exponential Runge--Kutta methods in a way that naturally allows of the structure of multiphysics systems, and discuss their application to both component and additively partitioned systems. The resulting partitioned exponential methods only compute matrix functions of the Jacobians of individual components, rather than the Jacobian of the full, coupled system. We derive modified formulations of particular methods of order two, three and four, and apply them to solve a partitioned reaction-diffusion problem. The proposed methods retain full order for several partitionings of the discretized problem, including by components and by physical processes.   
\par\noindent{\bf Keywords:} Exponential integrators, stiff order conditions, partitioned time discretization methods.
    %
\end{abstract}


\end{frontmatter}

\newpage
\tableofcontents

\newpage
\section{Introduction}
\label{sec:introduction}
%

Implicit methods are commonly used to solve stiff systems of ODEs since explicit methods, although cheaper per timestep, suffer significantly from stability restrictions. For large systems implicit methods become expensive as they involve the solution of (non-)linear systems of algebraic equations at each timestep. Widely used Krylov-based iterative solvers can be slow to converge in the absence of a good preconditioner \cite{rainwater2016}. For non-linear problems, obtaining a preconditioner can be challenging.

Multi-methods for time integration, such as implicit-explicit (IMEX) \cite{Ascher_1995_IMEX_LMM,Ascher_1997_IMEX_RK,Verwer_2004_IMEX_RKC,Sandu_2010_extrapolatedIMEX,Sandu_2012_ICCS-IMEX,Sandu_2014_IMEX_GLM_Extrap,Sandu_2014_IMEX-GLM,Sandu_2014_IMEX-RK,Sandu_2015_IMEX-TSRK,Sandu_2015_Stable_IMEX-GLM,Sandu_2016_highOrderIMEX-GLM} address the cost challenge by treating implicitly only the stiff components of the system, and the rest of the system explicitly. In a multi-physics system, with two or more physics based partitions, the implicit method is applied to the stiff partition and the explicit method to the non-stiff partition. IMEX methods can be inefficient when applied to systems where both partitions are stiff. Implicit-Implicit methods \cite{belytschko1979,Zienkiewicz1988,Farhat1991,Dettmer2012,Sandu_2015_GARK,Sandu_2016_GARK-MR} treat implicitly each component, but are less stable compared to a monolithic implicit integrator \cite{belytschko1979}.

Exponential time integrators solve the linear part of a problem exactly \cite{Hochbruck_2010_exp,minchev2005}, are more stable than explicit methods, and can be cheaper than implicit methods per time step \cite{loffeld2013,Sandu_2018_EPIRK-adjoint}. Implicit-Exponential methods \cite{luan2016,nie2006,chou2007,zhao2011} that treat one partition implicitly and the other partition exponentially have been studied in the recent years  \cite{luan2016,nie2006, chou2007}.

Our recent work \cite{narayanamurthi2019} focuses on partitioned methods that solve each component of the system with a different exponential scheme. In \cite{narayanamurthi2019} we developed the classical order condition theory for partitioned exponential methods constructed in the EPIRK framework \cite{tokman2006,tokman2011} using a GARK structure\cite{Sandu_2015_GARK,Sandu_2016_GARK-MR}, as well as constructed in the Rosenbrock-exponential framework \cite{hochbruck1997,Hochbruck_1998_exp}.  We found that partitioned methods satisfying the classical order conditions are amenable to certain computational optimizations that are unavailable to unpartitioned methods, can be more stable than unpartitioned methods for some experimental settings. However, as the stiffness of the coupled system increases, unpartitioned methods remain superior to partitioned methods. 

In this work we study the implementation of exponential Runge--Kutta methods that satisfy stiff order conditions \cite{hochbruck2009,Hochbruck_2005_expRK,luan2014,luan2014a,luan2014b} on partitioned systems of differential equations. Special choices of the linear operator lead to partitioned exponential schemes that only compute matrix functions of the Jacobians of individual components, rather than the Jacobian of the full, coupled system. We reformulate stiffly--accurate exponential Runge--Kutta methods \cite{Hochbruck_2005_expRK} in a way that naturally allows of the structure of multiphysics systems, and discuss their application to both component and additively partitioned systems.

The remainder of the paper is organized as follows.
Section \ref{sec:partitioned-systems} discusses component and additively partitioned systems of differential equations. Section \ref{sec:exprk_methods_w_conditions} reviews the exponential Runge--Kutta methods and stiff order conditions developed in \cite{hochbruck2009,Hochbruck_2005_expRK,luan2014,luan2014a,luan2014b}. Implementation of split exponential Runge-Kutta schemes for component partitioned systems is discussed in Section \ref{sec:pexprk-on-component-partitioning}, and for additively partitioned systems in Section \ref{sec:partitioned_exprk_methods}. Coefficients for particular methods satisfying the stiff order conditions are given, in the transformed and partitioned formulations, in Section \ref{sec:particular_methods}. Considerations related to computational cost of partitioned exponential integrators are given in Section \ref{sec:computational-optimizations}. Numerical experiments and results are discussed in Section \ref{sec:results}, and conclusions are drawn in Section \ref{sec:conclusions}.


\section{Partitioned systems of differential equations}
\label{sec:partitioned-systems}

In this paper we are concerned with solving partitioned initial value problems arising in multiphysics simulations. Additively partitioned systems 
\begin{equation}
\label{eqn:additively-split-ode}
u' = \f(u) = \fone(u) + \ftwo(u), \qquad u(\tn) = u_n \in \mathbb{R}^N,
\end{equation}
are characterized by a full right-hand side $\f$ that is the sum of two components $\fone$ and $\ftwo$,  with Jacobians 
\begin{equation}
\label{eqn:jacobians}
\begin{split}
\f' = \Jone+\Jtwo,\qquad
\Jone \coloneqq \pfrac{\fone}{u},\qquad
\Jtwo  \coloneqq \pfrac{\ftwo}{u}.
\end{split}
\end{equation}
Component partitioned systems split the state vector into components $v$ and $w$, and have the form
\begin{equation}
\label{eqn:component-partitioned-ode}
y' = \begin{bmatrix} v' \\ w' \end{bmatrix} = \begin{bmatrix} \fone(v, w) \\ \ftwo(v, w) \end{bmatrix} = \f(y), \qquad
\f'(y)= \begin{bmatrix} \Jone & \pfrac{\fone}{w} \\ \pfrac{\ftwo}{v} &\Jtwo \end{bmatrix},\qquad
\Jone \coloneqq \pfrac{\fone}{v},\qquad
\Jtwo  \coloneqq \pfrac{\ftwo}{w}.
\end{equation}
Using the variable transformation $u = v + w$ \cite{Ascher_1995_IMEX_LMM, Ascher_1997_IMEX_RK} the additively partitioned system \eqref{eqn:additively-split-ode} can be written in the component partitioned form \eqref{eqn:component-partitioned-ode}:
\begin{equation}
\label{eqn:component-form-additive-ode}
\begin{split}
y' = \begin{bmatrix} v' \\ w' \end{bmatrix}
=  \begin{bmatrix} \fone(v+ w) \\ \ftwo(v+ w) \end{bmatrix} =
\f(y), \qquad \J \coloneqq \f'(y) = \begin{bmatrix} \Jone & \Jone \\ \Jtwo & \Jtwo \end{bmatrix}.
\end{split}
\end{equation}
We note that  by adding the two coupled equations \eqref{eqn:component-form-additive-ode} we recover the original ODE \eqref{eqn:additively-split-ode}.

\section{Exponential Runge-Kutta methods and W order conditions}
\label{sec:exprk_methods_w_conditions}

Application of exponential methods to solve a general ODE system, 
\begin{equation}
\label{eqn:general-ode}
y' = \f(y), \qquad y(t_n) = y_n \in \mathbb{R}^N,
\end{equation}
is based on splitting the right hand side into a linear term with a fixed matrix $\Lb$ and a nonlinear reminder $\g$:
\begin{equation}
\label{eqn:lin-nonlin-split-ode}
y' = \f(y) = \Lb\cdot y + \g(y), \qquad \Lb \approx \f'(y), \qquad \g(y) \coloneqq \f(y) - \Lb\cdot y,
\end{equation}
where $\Lb$ captures the stiffness of the right hand side and therefore needs to approximate well the stiff directions of the Jacobian $\f'(y)$.

An exponential Runge--Kutta method \cite{Hochbruck_2005_expRK,hochbruck2009,luan2014}  applied to the semi-linearly split system \eqref{eqn:lin-nonlin-split-ode} reads:
\begin{equation}
\label{eqn:exp-RK}
\begin{split}
Y_i &= y_n + h \, c_i\, \varphi_{1}(c_i h \Lb)\, \f(y_n) + h \, \sum_{j = 2}^{i - 1} a_{i,j}(h \Lb)\, \Big( \g(Y_j) - \g(y_n) \Big), \qquad i = 1, \hdots s, \\
y_{n + 1} &= y_n + h \,  \varphi_{1}(h \Lb)\, \f(y_n) + h \, \sum_{j = 2}^{s} b_j(h \Lb) \, \Big( \g(Y_j) - \g(y_n) \Big).
\end{split}
\end{equation} 
Here, $y_n$ is the current solution, $Y_i$ are the internal stages of the method, $s$ is the number of internal stages in the method, $y_{n+1}$ is the numerical solution after one step, $\varphi_1$ is the analytic function with index $k = 1$ in the sequence 
\begin{equation}
	\label{eqn:phi_function_definition}
	\begin{split}
&	\varphi_0(z) = \exp(z),\qquad
		\varphi_{k}(z) = \int_{0}^{1} e^{(1 - \theta) z} \cdot \frac{\theta^{k - 1}}{(k-1)!} \,d\theta = \sum_{i=0}^{\infty} \cfrac{z^i}{(k + i)!},\qquad k \ge 1, \\
&	\varphi_{k+1}(z) = \cfrac{\varphi_k(z) - \varphi_{k}(0)}{z},\qquad 
	\varphi_{k}(0) = \frac{1}{k!}.
	\end{split}
\end{equation}
The coefficients $a_{i,j}$ and $b_j$ are analytic functions, typically linear combinations of $\varphi_k$ (and scaled versions), and $c_i$ are the abscissae corresponding to the internal stages. These coefficients can be expressed in a Butcher tableau form \cite[Chapter 2]{Butcher_2016_book} as follows:
\[
\renewcommand\arraystretch{1.2}
\begin{array}
{c|c}
\mathbf{c} & \mathbf{A} \\
\hline
& \mathbf{b}^{T}
\end{array}
\]
where the $a_{i,j}$ coefficients are arranged in the form of a matrix $\mathbf{A}$, the $c_i$ coefficients in vector $\mathbf{c}$ and $b_j$ coefficients in row vector $\mathbf{b}^{T}$.

Stiff order conditions for exponential Runge--Kutta methods are given in  \cite[Table 1]{luan2014b}, and are repeated here in Table \ref{tbl:expRK-order-conditions} (up to order four) for convenience. These order conditions hold for any choice of the matrix $\Lb$. The functions $\psi_{i,j}(h\Lb)$ are defined as:
\begin{equation}
\label{eqn:psi-ij}
    \psi_{i,j}(h\Lb) \coloneqq \sum_{k=2}^{j-1} a_{j,k}(h\Lb)\, \frac{c_k^{j-1}}{(j-1)!} - c_j^i\,\varphi_i(c_j h\Lb).  
\end{equation}
There is one minor notational difference. In the original paper \cite{luan2014b}, conditions 3b, 4b, and 4c of Table \ref{tbl:expRK-order-conditions} are written without any assumptions made about the Jacobian of $\g$, and therefore $\g'$ is an arbitrary square matrix. Here we replace $\g'$ with $\J - \Lb$ following equations \eqref{eqn:block-diagonal-L-component}, \eqref{eqn:block-diagonal-L-additive}. 

In \cite{luan2014,luan2014a,luan2014b,hochbruck2009} the following assumptions are made.

\begin{assumption}
\label{ass:one}
There exist constants $M$ and $\omega$ such that
\begin{equation}
\label{eqn:semigroup}
\Vert e^{t\,\Lb} \Vert \le M\, e^{t\,\omega}, \quad t\ge 0.
\end{equation}
As a consequence, the functions $\varphi_k(h\Lb)$ \eqref{eqn:phi_function_definition} as well as the coefficients $a_{i,j}(h\Lb)$ and $b_j(h\Lb)$ are bounded operators.
\end{assumption}

\begin{assumption}
\label{ass:two}
The equation \eqref{eqn:lin-nonlin-split-ode} admits a sufficiently smooth solution $y : [0,T] \to \mathbb{R}^N$. The nonlinear function $\g$ is sufficiently many times (Fr\'{e}chet) differentiable in a neighborhood of the solution, with uniformly bounded derivatives.
\end{assumption}
It is shown in \cite{luan2014,luan2014a,luan2014b,hochbruck2009} that, if Assumptions \ref{ass:one} and \ref{ass:two} are satisfied, then a method \eqref{eqn:exp-RK} that satisfies the order conditions of Table \ref{tbl:expRK-order-conditions} up to order $p$ computes a numerical solution $y_n$ that converges to the exact solution with order $p$.
\begin{table}[tb!]
    \centering
    \begin{tabular}{|c|c|c|}
        \hline 
        No. & Order & Order Condition \\ \hline
        1. &  1 & \(\displaystyle \sum_{j=1}^s b_j(h\Lb) = \varphi_1(h\Lb) \), \\ \hline
        2a. & 2 & \(\displaystyle \sum_{j=2}^s b_j(h\Lb)\, c_j = \varphi_2(h\Lb) \),  \\ \hline
        2b. & 2 & \(\displaystyle \sum_{j=1}^s a_{i,j}(h\Lb) = c_i \varphi_1(h\Lb), \quad i = 2, \dots, s \),\\ \hline
        3a. & 3 & \(\displaystyle \sum_{j=2}^s b_j(h\Lb) \frac{c_j^2}{2} = \varphi_3(h\Lb) \), \\ \hline 
        3b. & 3 & \(\displaystyle \sum_{j=2}^s b_j(h\Lb) \, \left(\J-\Lb \right) \, \psi_{2,j}(h\Lb)= 0 \), \\ \hline
        4a. & 4 & \(\displaystyle \sum_{j=2}^{s} b_j(h\Lb)  \frac{c_j^3}{3!} = \varphi_4(h\Lb) \), \\ \hline
        4b. & 4 & \(\displaystyle \sum_{j=2}^s b_j(h\Lb) \, \left(\J-\Lb \right) \, \psi_{3,j}(h\Lb)= 0 \). \\ \hline
        4c. & 4 & \(\displaystyle \sum_{j=2}^s b_j(h\Lb) \, \left(\J-\Lb \right) \, \sum_{k=2}^{j-1} a_{jk}(h\Lb) \, \left(\J-\Lb \right) \, \psi_{2,k}(h\Lb)= 0 \). \\ \hline
        4d. & 4 & \(\displaystyle \sum_{j=2}^s b_j(h\Lb) \,  c_j \, \mathsf{K} \, \psi_{2,j}(h\Lb)= 0 \). \\ \hline
    \end{tabular}
    \caption{Stiff oder conditions for exponential Runge--Kutta methods (up to order four) \cite{luan2014b}, where $\Lb$ and $\mathsf{K}$ are arbitrary matrices.}
    \label{tbl:expRK-order-conditions}
\end{table}
%

\section{Split exponential Runge-Kutta schemes for component partitioned systems}
\label{sec:pexprk-on-component-partitioning}
%
Application of exponential methods to the component partitioned system \eqref{eqn:component-partitioned-ode} requires a linear-nonlinear split of the form \eqref{eqn:lin-nonlin-split-ode} where $\Lb \approx \f'$ along the stiff subspace. The matrix $\Lb$ has the same dimension as $\f'$, and in general has no particular structure. Evaluation of matrix functions $\varphi_k(h\Lb)$ is computationally expensive.

In this paper we seek to leverage the partitioned structure of the system \eqref{eqn:component-partitioned-ode} in order to build computationally efficient implementations of exponential Runge-Kutta methods.

Specifically, we consider a linear-nonlinear splitting \eqref{eqn:lin-nonlin-split-ode} of the component partitioned system \eqref{eqn:component-partitioned-ode} of the form
\begin{equation}
\label{eqn:component-split-ode-semilinear}
\begin{split}
y' &= \begin{bmatrix} v' \\ w' \end{bmatrix} =
\begin{bmatrix} \fone(v, w) \\ \ftwo(v, w) \end{bmatrix} =
\begin{bmatrix}  \Lone \cdot v  + \gone(v, w) \\ \Ltwo \cdot w + \gtwo(v, w) \end{bmatrix} 
= \Lb \cdot y  + \g(y).
\end{split}
\end{equation}
where $\Lone$ and $\Ltwo$ are linear operators that contain the stiffnesses of $\fone$ and $\ftwo$, respectively, $\gone$ and $\gtwo$ are the corresponding non-linear remainder functions. This splitting is of the form \eqref{eqn:lin-nonlin-split-ode} with
\begin{equation}
\label{eqn:block-diagonal-L-additive}
\Lb \coloneqq \begin{bmatrix} \Lone & \Zero \\  \Zero & \Ltwo \end{bmatrix}, \quad
\g(v,w) \coloneqq \begin{bmatrix} \gone(v, w) \\ \gtwo(v, w) \end{bmatrix} 
\coloneqq \begin{bmatrix} \fone(v, w) - \Lone \cdot v \\ \ftwo(v, w) - \Ltwo \cdot w \end{bmatrix}.
\end{equation}
The Jacobian of the non-linear remainder, $\g$, has the structure:
\begin{equation}
\label{eqn:block-diagonal-L-component}
\g'(y)= \f'(y)-\Lb =
\begin{bmatrix} \Jone-\Lone & \pfrac{\fone}{w} \\ \pfrac{\ftwo}{v} & \Jtwo - \Ltwo \end{bmatrix}.
\end{equation}

\begin{remark}
Applying $\varphi_k$ to the block-diagonal matrix $\Lb$ from  \eqref{eqn:block-diagonal-L-additive} is equivalent to applying the matrix function to each individual block: 
\begin{equation}
\varphi_k(h \Lb)
= \begin{bmatrix} \varphi_k(h \Lone) & \Zero \\  \Zero & \varphi_k(h \Ltwo) \end{bmatrix}.
\end{equation}
This idea is central to obtaining computational optimizations for partitioned-exponential methods that are unavailable in the case of unpartitioned methods. 
\end{remark}

The exponential Runge-Kutta scheme \eqref{eqn:exp-RK} applied to \eqref{eqn:component-split-ode-semilinear} leads to the following efficient implementation:
\begin{equation}
\label{eqn:exp-RK-component-split}
\begin{split}
\begin{bmatrix} V_i \\ W_i  \end{bmatrix} &= 
\begin{bmatrix} v_{n} + h \, c_i\, \varphi_{1}(c_i h \Lone)\, \fone(v_n,w_n) +  h \, \sum_{j = 2}^{i - 1} a_{i,j}(h \Lone)\, \Big( \gone(V_j,W_j) - \gone(v_n,w_n) \Big) \\ 
w_{n} + h \, c_i\, \varphi_{1}(c_i h \Ltwo)\, \ftwo(v_n,w_n) + h \, \sum_{j = 2}^{i - 1} a_{i,j}(h \Ltwo)\, \Big( \gtwo(V_j,W_j) - \gtwo(v_n,w_n) \Big) 
\end{bmatrix}, \\
\begin{bmatrix} v_{n+1} \\ w_{n+1} \end{bmatrix}  &=\begin{bmatrix} v_{n} + h \,  \varphi_{1}(h \Lone)\, \fone(v_n,w_n) +  h \, \sum_{j = 2}^{i - 1} b_{j}(h \Lone)\, \Big( \gone(V_j,W_j) - \gone(v_n,w_n) \Big) \\ 
w_{n} + h \, \varphi_{1}(h \Ltwo)\, \ftwo(v_n,w_n) + h \, \sum_{j = 2}^{i - 1} b_{j}(h \Ltwo)\, \Big( \gtwo(V_j,W_j) - \gtwo(v_n,w_n) \Big) \end{bmatrix}.
\end{split}
\end{equation} 

\begin{remark}
In \eqref{eqn:exp-RK-component-split} the matrix functions of $\Lone$ are applied to $\fone$, $\gone$, and the matrix functions of $\Ltwo$ are applied to $\ftwo$, $\gtwo$.
\end{remark}

\begin{remark}
The formulation \eqref{eqn:exp-RK-component-split} can be extended to any number of partitions. 
\end{remark}

\subsection{Convergence considerations for component partitioned methods}
\label{sec:convergence-component-partitioned}

Following \cite{luan2014,luan2014a,luan2014b,hochbruck2009} we extend Assumption \ref{ass:one} as follows.

\begin{assumption}
\label{ass:one-partitioned}
There exist constants $M^{\{1\}}$, $M^{\{2\}}$, $\omega^{\{1\}}$, and $\omega^{\{2\}}$, such that
\begin{equation}
\label{eqn:semigroup-partitioned}
\Vert e^{t\,\Lone} \Vert \le M^{\{1\}}\, e^{t\,\omega^{\{1\}}}, \quad 
\Vert e^{t\,\Ltwo} \Vert \le M^{\{2\}}\, e^{t\,\omega^{\{2\}}}, \quad t\ge 0.
\end{equation}
Consequently, Assumption \ref{ass:one} holds for the linear operator $\Lb$ defined in \eqref{eqn:block-diagonal-L-additive}.
\end{assumption}

We also require that Assumption \ref{ass:two} holds. We are particularly interested in the case where $\Lone=\Jone$ and $\Ltwo=\Jtwo$. This implies that the off-diagonal blocks of the Jacobian in equation \eqref{eqn:block-diagonal-L-component} are Lipschitz continuous, with moderate size Lipschitz constants:
\[
\begin{split}
\frac{\partial \gone(v, w)}{\partial w} = \frac{\partial \fone(v, w)}{\partial w}, &
\left\Vert \frac{\partial \fone}{\partial w}(v_a, w_a) - \frac{\partial \fone}{\partial w}(v_b, w_b) \right\Vert \le
\ell_1\,\left( \left\Vert v_a-v_b \right\Vert + \left\Vert w_a-w_b \right\Vert \right), \\
\frac{\partial \gtwo(v, w)}{\partial v} = \frac{\partial \ftwo(v, w)}{\partial v}, &
\left\Vert \frac{\partial \ftwo}{\partial v}(v_a, w_a) - \frac{\partial \ftwo}{\partial c}(v_b, w_b) \right\Vert \le
\ell_2\,\left( \left\Vert v_a-v_b \right\Vert + \left\Vert w_a-w_b \right\Vert \right).
\end{split}
\]


If each component of partitioned system \eqref{eqn:component-split-ode-semilinear} has a dynamics that is stiff, but the coupling terms are non-stiff, then Assumption \ref{ass:two} holds and the convergence of the split scheme \eqref{eqn:exp-RK-component-split} follows from the general theory \cite{luan2014,luan2014a,luan2014b,hochbruck2009}. Increasing the stiffness in the coupling terms leads to a decrease of the time step required for convergence.


\section{Split exponential Runge-Kutta schemes for additively partitioned systems}
\label{sec:partitioned_exprk_methods}

Application of a standard exponential Runge-Kutta method \eqref{eqn:exp-RK} to the additively partitioned system \eqref{eqn:additively-split-ode} requires matrix operations $\varphi_k(h \Lb)$ on a linear operator that approximates the sum of the individual Jacobians \eqref{eqn:jacobians}
\[
\Lb \approx \Jone + \Jtwo.
\]
We seek to develop formulations of the scheme \eqref{eqn:exp-RK} that only perform matrix operations $\varphi_k(h \Lone)$, $\varphi_k(h \Ltwo)$ on individual Jacobian approximations $\Lone \approx \Jone$, $\Ltwo \approx \Jtwo$.
In order to achieve this we first consider a reformulation of exponential Runge-Kutta methods \eqref{eqn:exp-RK}.

\subsection{An alternative formulation of exponential Runge-Kutta methods}

We now derive an alternative formulation of \eqref{eqn:exp-RK} that will prove useful in the implementation of partitioned schemes.

\begin{theorem}
The exponential Runge--Kutta method \eqref{eqn:exp-RK} can be rewritten as:
\begin{equation}
\label{eqn:exp-RK-simple}
\begin{split}
Y_i &= y_n + h\,\boldsymbol{\alpha}_{i,1}(h\Lb)\, f(y_n)  + h\,\sum_{j=2}^{i-1} \boldsymbol{\alpha}_{i,j}(h \Lb)\,\bigg(f(Y_{j}) - f(y_n) \bigg), \quad i = 1,\dots,s, \\
y_{n + 1} &= y_n + h \, \boldsymbol{\beta}_{1}(h \Lb) \, f(y_n)   + h \, \sum_{j=2}^s\boldsymbol{\beta}_{j}(h \Lb) \, \bigg(f(Y_{j}) - f(y_n) \bigg).
\end{split}
\end{equation} 
We denote:
\[
\mathbf{E}_{2:s,2:s}(z) = \bigg( \Id_{s-1} + z\, \mathbf{A}_{2:s,2:s}(z) \bigg)^{-1}.
\]
Since $\mathbf{A}_{2:s,2:s}$ is strictly lower triangular, $\mathbf{E}_{2:s,2:s}$ is lower triangular and involves only multiplications of elements of $\mathbf{A}$. The coefficients of \eqref{eqn:exp-RK-simple} are defined as:
\[
\boldsymbol{\alpha}_{1,1:s}(z) = \Zero, \quad
\boldsymbol{\alpha}_{2:s,1}(z) = \mathbf{E}_{2:s,2:s}(z)\,\begin{bmatrix} c_2\, \varphi_{1}(c_2 z) \\ \vdots \\ c_s\, \varphi_{1}(c_s z) \end{bmatrix}, \quad
\boldsymbol{\alpha}_{2:s,2:s}(z) = \mathbf{E}_{2:s,2:s}(z)\,\mathbf{A}_{2:s,2:s}(z),
\]
\[
\boldsymbol{\beta}_1(z) = \varphi_{1}(z) - z\,\sum_{j=2}^s \mathbf{b}_{j}(z)\, \boldsymbol{\alpha}_{j,1}(z), \quad
\boldsymbol{\beta}_{2:s}^T(z)  = \mathbf{b}_{2:s}^T(z)\,\mathbf{E}_{2:s,2:s}(z).
\]
\label{thm:transformation_theorem}
\end{theorem}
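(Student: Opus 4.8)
The plan is to start from the original exponential Runge--Kutta method \eqref{eqn:exp-RK} and eliminate the nonlinear remainder $\g$ in favour of $\f$, using the defining relation $\g(y) = \f(y) - \Lb\,y$. Substituting this into the stage equations of \eqref{eqn:exp-RK}, the differences $\g(Y_j) - \g(y_n)$ become $\f(Y_j) - \f(y_n) - \Lb\,(Y_j - y_n)$. The term $-h\,\Lb\,(Y_j - y_n)$ is what couples the stages through $\Lb$; the key idea is to move these coupling terms to the left-hand side and invert the resulting triangular linear system. First I would write, for $i = 2,\dots,s$,
\[
Y_i = y_n + h\,c_i\,\varphi_1(c_i h\Lb)\,\f(y_n) + h\sum_{j=2}^{i-1} a_{i,j}(h\Lb)\,\Big(\f(Y_j) - \f(y_n) - \Lb\,(Y_j - y_n)\Big),
\]
and then isolate the $Y_j - y_n$ contributions on one side.

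Next I would introduce the shifted unknowns $\Delta_j \coloneqq Y_j - y_n$ and observe that the $-h\,a_{i,j}(h\Lb)\,\Lb\,\Delta_j$ terms, together with the identity $\Delta_i$ on the left, assemble into the operator $\Id_{s-1} + z\,\mathbf{A}_{2:s,2:s}(z)$ acting on the vector of stage increments (with $z = h\Lb$), exactly as in the statement. Since $\mathbf{A}_{2:s,2:s}$ is strictly lower triangular, this operator is lower triangular with identity diagonal, hence invertible, and its inverse $\mathbf{E}_{2:s,2:s}(z)$ is again lower triangular and is a finite polynomial expression in the entries of $\mathbf{A}$ — this justifies the parenthetical remark in the theorem. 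Solving for the $\Delta_i$ gives precisely $\boldsymbol{\alpha}_{2:s,1}(z) = \mathbf{E}_{2:s,2:s}(z)\,[\,c_2\varphi_1(c_2 z),\dots,c_s\varphi_1(c_s z)\,]^T$ for the coefficient multiplying $\f(y_n)$, and $\boldsymbol{\alpha}_{2:s,2:s}(z) = \mathbf{E}_{2:s,2:s}(z)\,\mathbf{A}_{2:s,2:s}(z)$ for the coefficients multiplying the $\f(Y_j) - \f(y_n)$ differences. Here one must be careful to track how the term $h\,c_i\,\varphi_1(c_i h\Lb)\,\f(y_n)$ interacts with the inversion: it contributes only to the $\f(y_n)$ column, which is why the two $\boldsymbol{\alpha}$ blocks have the stated form and why $\boldsymbol{\alpha}_{1,1:s} = \Zero$ (the first stage is trivial since the inner sum is empty).

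For the update step I would repeat the substitution $\g \mapsto \f - \Lb\,y$ in the $y_{n+1}$ equation, obtaining $y_{n+1} = y_n + h\,\varphi_1(h\Lb)\,\f(y_n) + h\sum_{j=2}^s b_j(h\Lb)\big(\f(Y_j) - \f(y_n) - \Lb\,\Delta_j\big)$, and then substitute the already-solved expressions $\Delta_j = \boldsymbol{\alpha}_{j,1}(h\Lb)\,\f(y_n) + \sum_{k} \boldsymbol{\alpha}_{j,k}(h\Lb)(\f(Y_k)-\f(y_n))$. Collecting the coefficient of $\f(y_n)$ produces $\boldsymbol{\beta}_1(z) = \varphi_1(z) - z\sum_{j=2}^s \mathbf{b}_j(z)\,\boldsymbol{\alpha}_{j,1}(z)$, while the coefficient of the differences $\f(Y_j) - \f(y_n)$, after using $\mathbf{b}_{2:s}^T\,\big(\Id - z\,\boldsymbol{\alpha}_{2:s,2:s}\big)$-type bookkeeping and the identity $\Id_{s-1} - z\,\mathbf{A}_{2:s,2:s}\,\mathbf{E}_{2:s,2:s} = \mathbf{E}_{2:s,2:s}$ (equivalently $\mathbf{E}(\Id + z\mathbf{A}) = \Id$), collapses to $\boldsymbol{\beta}_{2:s}^T(z) = \mathbf{b}_{2:s}^T(z)\,\mathbf{E}_{2:s,2:s}(z)$. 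The main obstacle I anticipate is purely organizational rather than conceptual: keeping the indexing of the strictly-lower-triangular blocks consistent, correctly separating the "$\f(y_n)$ column" from the "difference columns" through the matrix inversion, and verifying that all $\varphi_k$-valued operators commute appropriately (they do, since they are all functions of the single matrix $\Lb$). Once the algebra is arranged with the substitution and the triangular solve, the identification of the $\boldsymbol{\alpha}$ and $\boldsymbol{\beta}$ coefficients is forced.
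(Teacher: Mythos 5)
Your proposal is correct and takes essentially the same route as the paper's proof: substitute $\g(y)=\f(y)-\Lb\,y$ so the stage couplings become $-h\,a_{i,j}(h\Lb)\,\Lb\,(Y_j-y_n)$, move these to the left and invert the lower-triangular operator $\Id_{s-1}+h\Lb\,\mathbf{A}_{2:s,2:s}(h\Lb)$ to identify the $\boldsymbol{\alpha}$ blocks, then back-substitute the stage increments into the update formula and use $(\Id+z\,\mathbf{A}_{2:s,2:s}(z))^{-1}=\Id - z\,\boldsymbol{\alpha}_{2:s,2:s}(z)$ to obtain $\boldsymbol{\beta}_1$ and $\boldsymbol{\beta}_{2:s}^T=\mathbf{b}_{2:s}^T\,\mathbf{E}_{2:s,2:s}$. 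This matches the paper's argument step for step, so no further changes are needed.
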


\begin{proof}
Using equations \eqref{eqn:lin-nonlin-split-ode}, the method \eqref{eqn:exp-RK} can be written, after a rearrangement of the internal stages, as:
\begin{equation*}
\begin{split}
Y_i - y_n &=  h \, c_i \varphi_{1}(c_i h \Lb) f(y_n) + h \, \sum_{j = 2}^{i - 1} a_{i,j}(h \Lb)\, \Big( f(Y_j) - f(y_n) \Big)  - \sum_{j = 2}^{i - 1} a_{i,j}(h \Lb)\, h \Lb \, \Big( Y_j - y_n \Big), \\
y_{n + 1} &= y_n + h \,  \varphi_{1}(h \Lb) f(y_n) + h \, \sum_{j = 2}^{s} b_j(h \Lb) \, \Big( f(Y_j) - f(y_n) \Big)  - \sum_{j = 2}^{s} b_j(h \Lb) \, h \Lb\, \Big( Y_j - y_n \Big).
\end{split}
\end{equation*} 
Since $Y_1 = y_n$, we have that $c_1 = 0$, and rest of the internal stages can be stacked and rearranged after multiplication by $\mathbf{E}_{2:s,2:s}$ as follows:
\begin{equation*}
\begin{split}
\begin{bmatrix} Y_2 - y_n \\  \vdots \\ Y_s - y_n  \end{bmatrix} &=  
h\,\mathbf{E}_{2:s,2:s}(h \Lb)\,\begin{bmatrix} c_2\, \varphi_{1}(c_2 h \Lb) \\ \vdots \\ c_s\, \varphi_{1}(c_s h \Lb) \end{bmatrix} f(y_n)  + h\, \mathbf{E}_{2:s,2:s}(h \Lb)\,\mathbf{A}_{2:s,2:s}(h \Lb)\, 
\begin{bmatrix} f(Y_{2}) - f(y_n) \\ \vdots \\ f(Y_{s}) - f(y_n) \end{bmatrix} \\
&=  
h\,\boldsymbol{\alpha}_{2:s,1}(h\Lb)\, f(y_n)  + h\,\boldsymbol{\alpha}_{2:s,2:s}(h \Lb)\,\begin{bmatrix} f(Y_{2}) - f(y_n) \\ \vdots \\ f(Y_{s}) - f(y_n) \end{bmatrix}.
\end{split}
\end{equation*} 
The final one step solution reads:
\begin{equation*}
\begin{split}
y_{n + 1} &= y_n + h \,  \varphi_{1}(h \Lb) f(y_n) + h \, \mathbf{b}_{2:s}^T(h \Lb) \, \begin{bmatrix} f(Y_{2}) - f(y_n) \\ \vdots \\ f(Y_{s}) - f(y_n) \end{bmatrix} 
 - \mathbf{b}_{2:s}^T(h \Lb) \, h \Lb\, \begin{bmatrix} Y_2 - y_n \\  \vdots \\ Y_s - y_n  \end{bmatrix}.
\end{split}
\end{equation*} 
Substituting the expression for $[Y_2 - y_n, \hdots, Y_s - y_n]^{T}$ from above:
\begin{equation*}
\begin{split}
y_{n + 1}  &= y_n + h \,  \varphi_{1}(h \Lb) f(y_n) + h \, \mathbf{b}_{2:s}^T(h \Lb) \, \begin{bmatrix} f(Y_{2}) - f(y_n) \\ \vdots \\ f(Y_{s}) - f(y_n) \end{bmatrix} \\
&\qquad - h\,\mathbf{b}_{2:s}^T(h \Lb)\, \boldsymbol{\alpha}_{2:s}(h\Lb) \, h \Lb\, f(y_n) 
- h\,\mathbf{b}_{2:s}^T(h \Lb) \, h \Lb\, \boldsymbol{\alpha}_{2:s,2:s}(h \Lb)\,\begin{bmatrix} f(Y_{2}) - f(y_n) \\ \vdots \\ f(Y_{s}) - f(y_n) \end{bmatrix} 
\end{split}
\end{equation*} 
After collecting the terms, 
\begin{equation*}
    \begin{split}
        y_{n + 1} &= y_n + h \, \bigg[ \varphi_{1}(h \Lb) - \mathbf{b}_{2:s}^T(h \Lb)\, \boldsymbol{\alpha}_{2:s,1}(h\Lb) \, h \Lb\bigg] \, f(y_n)  \\
&\qquad + h \, \mathbf{b}_{2:s}^T(h \Lb) \bigg[\Id -  h \Lb\, \boldsymbol{\alpha}_{2:s,2:s}(h \Lb) \bigg]\, \begin{bmatrix} f(Y_{2}) - f(y_n) \\ \vdots \\ f(Y_{s}) - f(y_n) \end{bmatrix}.
    \end{split}
\end{equation*} 
It can be shown that:
\[
\Id -  z\, \boldsymbol{\alpha}_{2:s,2:s}(z) = \Id -  z\, \bigg( \Id + z\, \mathbf{A}_{2:s,2:s}(z) \bigg)^{-1} \mathbf{A}_{2:s,2:s}(z) =  \bigg( \Id + z\, \mathbf{A}_{2:s,2:s}(z) \bigg)^{-1}.
\]
Consequently, the final one step solution is written as:
\begin{equation*}
\begin{split}
y_{n + 1} &= y_n + h \, \boldsymbol{\beta}_{1}(h \Lb) \, f(y_n)   + h \, \boldsymbol{\beta}_{2:s}^T(h \Lb) \, \begin{bmatrix} f(Y_{2}) - f(y_n) \\ \vdots \\ f(Y_{s}) - f(y_n) \end{bmatrix}.
\end{split}
\end{equation*} 
\end{proof}

\begin{example}

For example, for $s=2$, we have:
\[
\begin{split}
\boldsymbol{\alpha}(z) &=\begin{bmatrix}
0 & 0  \\
c_2 \varphi_{1}(c_2 z) & 0 
\end{bmatrix}, \quad
\boldsymbol{\beta}(z) =
\begin{bmatrix}
\varphi_{1}(z) - b_2(z)\,  z\, c_2\, \varphi_{1}(c_2 z) \\
b_2(z) 
 \end{bmatrix}.
\end{split}
\]
For $s=3$, we have:
\[
\begin{split}
\boldsymbol{\alpha}(z) &=\begin{bmatrix}
0 & 0 & 0 \\
c_2 \varphi_{1}(c_2 z) & 0 & 0  \\
c_3 \varphi_{1}(c_3 z)-c_2 \,z a_{3,2}(z) \varphi_{1}(c_2 z) & a_{3,2}(z) & 0 
\end{bmatrix}, \\
\boldsymbol{\beta}(z) &=
\begin{bmatrix}
\varphi_{1}(z)- b_3(z)\, \left(z c_3 \varphi_{1}(c_3 z)-c_2 z^2 a_{3,2}(z) \varphi_{1}(c_2 z)\right) 
 - b_2(z)\,  z\, c_2\, \varphi_{1}(c_2 z) \\
 - b_3(z)\, z a_{3,2}(z) +b_2(z) \\
 b_3(z)
 \end{bmatrix}.
\end{split}
\]
\end{example}

\subsection{PEXPRK methods}
%
We next build partitioned-exponential Runge--Kutta methods using the \textit{transformed} exponential Runge--Kutta method  formulation \eqref{eqn:exp-RK-simple}. The \textit{transformed} scheme \eqref{eqn:exp-RK-simple} applied to the additively partitioned system in component form \eqref{eqn:component-form-additive-ode} reads:
\begin{equation*}
\label{eqn:exp-RK-simple-component}
\begin{split}
U_i &\coloneqq V_i + W_i, \qquad
Y_i \coloneqq  \begin{bmatrix}
V_i\\
W_i
\end{bmatrix}, \\
\begin{bmatrix} V_i \\  W_i \end{bmatrix} &= \begin{bmatrix} v_{n} \\  w_{n} \end{bmatrix} 
 +  h \, \begin{bmatrix}  \boldsymbol{\alpha}_{i,1}(h\Lone)\, \fone(y_{n}) \\  \boldsymbol{\alpha}_{i,1}(h\Ltwo)\, \ftwo(y_{n})  \end{bmatrix} 
 + h\,\sum_{j=2}^{i-1} \begin{bmatrix}  \boldsymbol{\alpha}_{i,j}(h \Lone)\,\bigg(\fone(Y_{j}) - \fone(y_{n}) \bigg) \\
 \boldsymbol{\alpha}_{i,j}(h \Ltwo)\,\bigg( \ftwo(Y_{j}) - \ftwo(y_{n}) \bigg)  \end{bmatrix}, \quad i = 1, \hdots, s, \\
\begin{bmatrix} v_{n+1} \\  w_{n+1} \end{bmatrix}  &= \begin{bmatrix} v_{n} \\  w_{n} \end{bmatrix}  
+  h \, \begin{bmatrix}  \boldsymbol{\beta}_{1}(h\Lone)\, \fone(y_{n}) \\  \boldsymbol{\beta}_{1}(h\Ltwo)\, \ftwo(y_{n})  \end{bmatrix} 
 + h\,\sum_{j=2}^{s} \begin{bmatrix}  \boldsymbol{\beta}_{j}(h \Lone)\,\bigg(\fone(Y_{j}) - \fone(y_{n}) \bigg) \\
 \boldsymbol{\beta}_{j}(h \Ltwo)\,\bigg( \ftwo(Y_{j}) - \ftwo(y_{n}) \bigg)  \end{bmatrix}.
\end{split}
\end{equation*} 
Here $V_i$ and $W_i$ are the internal stages of the corresponding component ODEs, $Y_i$ are the internal stages of the method obtained by adding the internal stages and the final one step solutions of the methods applied to the component ODEs, $v_n$ and $w_n$ are the current solutions of the component ODEs, $y_{n}$ is the current solution of the original ODE, $v_{n+1}$ and $w_{n+1}$ are the final one step solutions of the methods applied to the component ODEs.

By adding the internal stages and the final solutions, the above expressions lead to the partitioned Exponential Runge--Kutta formulation.
\begin{definition}
A partitioned Exponential Runge--Kutta (PEXPRK) method reads:
\begin{equation}
\label{eqn:exp-RK-simple-additive}
\begin{split}
U_i &= u_{n} +  h \, \boldsymbol{\alpha}_{i,1}(h\Lone)\, \fone(u_{n}) +  h \, \boldsymbol{\alpha}_{i,1}(h\Ltwo)\, \ftwo(u_{n})  \\
& + h\,\sum_{j=2}^{i-1} \boldsymbol{\alpha}_{i,j}(h \Lone)\,\bigg(\fone(U_{j}) - \fone(u_{n}) \bigg)
+ h\,\sum_{j=2}^{i-1} \boldsymbol{\alpha}_{i,j}(h \Ltwo)\,\bigg( \ftwo(U_{j}) - \ftwo(u_{n}) \bigg), \quad i = 1, \hdots, s, \\
u_{n+1} &= u_{n} 
+  h \, \boldsymbol{\beta}_{1}(h\Lone)\, \fone(u_{n}) +  h \,  \boldsymbol{\beta}_{1}(h\Ltwo)\, \ftwo(u_{n}) \\
& + h\,\sum_{j=2}^{s}  \boldsymbol{\beta}_{j}(h \Lone)\,\bigg(\fone(U_{j}) - \fone(u_{n}) \bigg) 
+ h\,\sum_{j=2}^{s}  \boldsymbol{\beta}_{j}(h \Ltwo)\,\bigg( \ftwo(U_{j}) - \ftwo(u_{n}) \bigg).
\end{split}
\end{equation} 
\end{definition}
Observe that method \eqref{eqn:exp-RK-simple-additive} applies matrix functions of $\Lone$ only to $\fone$, and matrix functions of $\Ltwo$ only to $\ftwo$. In this regard, it is a partitioned method.

\begin{remark}[Systems with multiple partitions]
For a $P$-way partitioned system \eqref{eqn:additively-split-ode}
\begin{equation}
\label{eqn:pway-split-ode}
u' = \f(u) = \sum_{p=1}^P \f^{\{p\}}(u), \qquad u(\tn) = u_n,
\end{equation}
the PEXPRK method \eqref{eqn:exp-RK-simple-additive} becomes:
\begin{equation}
\label{eqn:exp-RK-multiple-additive}
\begin{split}
U_i &= u_{n} +  h \, \sum_{p=1}^P \boldsymbol{\alpha}_{i,1}(h\Lb^{\{p\}})\, \f^{\{p\}}(u_{n})   \\
& + h\,\sum_{p=1}^P \sum_{j=2}^{i-1} \boldsymbol{\alpha}_{i,j}(h \Lb^{\{p\}})\,\bigg(\f^{\{p\}}(U_{j}) - \f^{\{p\}}(u_{n}) \bigg), \quad i = 1, \hdots, s, \\
u_{n+1} &= u_{n} 
+  h \,\sum_{p=1}^P  \boldsymbol{\beta}_{1}(h\Lb^{\{p\}})\, \f^{\{p\}}(u_{n}) + h\,\sum_{p=1}^P \sum_{j=2}^{s}  \boldsymbol{\beta}_{j}(h \Lb^{\{p\}})\,\bigg(\f^{\{p\}}(U_{j}) - \f^{\{p\}}(u_{n}) \bigg).
\end{split}
\end{equation} 
\end{remark}

\begin{theorem}[Partitioned method written in terms of residuals]
\label{thm:pexprk-in-residuals}
The PEXPRK method \eqref{eqn:exp-RK-multiple-additive} can be written in terms of residual differences as follows:
\begin{equation}
\label{eqn:pexpw-in-residuals}
\begin{split}
U_i &= u_{n} +  h \, \sum_{p=1}^P \overline{\boldsymbol{\alpha}}^{\{p\}}_{i,1}(h \Lb^{\{p\}})\, \f^{\{p\}}(u_{n})   \\
& + h\,\sum_{p=1}^P \sum_{j=2}^{i-1} \overline{\boldsymbol{\alpha}}^{\{p\}}_{i,j}(h \Lb^{\{p\}})\,\bigg(\g^{\{p\}}(U_{j}) - \g^{\{p\}}(u_{n}) \bigg), \quad i = 1, \hdots, s, \\
u_{n+1} &= u_{n} 
+  h \,\sum_{p=1}^P  \overline{\boldsymbol{\beta}}^{\{p\}}_{1}(h\Lb^{\{p\}})\, \f^{\{p\}}(u_{n}) + h\,\sum_{p=1}^P \sum_{j=2}^{s}  \overline{\boldsymbol{\beta}}_{j}^{\{p\}}(h \Lb^{\{p\}})\,\bigg(\g^{\{p\}}(U_{j}) - \g^{\{p\}}(u_{n}) \bigg).
\end{split}
\end{equation} 
The coefficients depend on all matrices $\Lb^{\{1\}}$ to $\Lb^{\{P\}}$ and are computed as follows:
\begin{equation}
\label{eqn:pexpw-in-residuals-coeffs}
\begin{split}
\overline{\mathbf{E}}(h\Lb^{\{1\}} \dots h\Lb^{\{P\}}) &= \bigg( \mathbf{I}_{s-1} - \sum_{p=1}^P   h\Lb^{\{p\}}\,\boldsymbol{\alpha}_{2:s,2:s}(h\Lb^{\{p\}}) \bigg)^{-1}, \\
\overline{\boldsymbol{\alpha}}_{2:s,1:s}^{\{p\}}(h\Lb^{\{1\}} \dots h\Lb^{\{P\}}) &\coloneqq \overline{\mathbf{E}}(h\Lb^{\{1\}} \dots h\Lb^{\{P\}}) \,\boldsymbol{\alpha}_{2:s,1:s}(h\Lb^{\{p\}}), \\
\left(\overline{\boldsymbol{\beta}}_{1:s}^{\{p\}}\right)^T(h\Lb^{\{1\}} \dots h\Lb^{\{P\}}) &\coloneqq \boldsymbol{\beta}^T_{1:s}(h\Lb^{\{p\}}) +  \left(  \sum_{q=1}^P  h \Lb^{\{q\}} \,\boldsymbol{\beta}^T_{2:s}(h\Lb^{\{q\}}) \right)\,\overline{\boldsymbol{\alpha}}_{2:s,1:s}^{\{p\}} \\
\end{split}
\end{equation} 

Since $\boldsymbol{\alpha}_{2:s,2:s}$ are strictly lower triangular matrices, a formal inversion of the left hand side matrix $\overline{\mathbf{E}}$ is possible, and the inverse elements are products of the entries of the matrices $\boldsymbol{\alpha}(h\Lb^{\{p\}})$. 
\end{theorem}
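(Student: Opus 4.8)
The plan is to pass from the function–evaluation form \eqref{eqn:exp-RK-multiple-additive} to the residual form \eqref{eqn:pexpw-in-residuals} by inserting, into each bracket of \eqref{eqn:exp-RK-multiple-additive}, the elementary identity
\[
\f^{\{p\}}(U_{j}) - \f^{\{p\}}(u_{n}) = \Lb^{\{p\}}\,(U_{j} - u_{n}) + \Big(\g^{\{p\}}(U_{j}) - \g^{\{p\}}(u_{n})\Big),
\]
which is just the definition $\g^{\{p\}}(u) \coloneqq \f^{\{p\}}(u) - \Lb^{\{p\}}\,u$ of the residual of partition $p$. The terms carrying $\Lb^{\{p\}}(U_{j}-u_{n})$ are then moved to the left-hand side of the stage equations. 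Because matrix functions of $\Lb^{\{p\}}$ commute with $\Lb^{\{p\}}$, one has $h\,\boldsymbol{\alpha}_{i,j}(h\Lb^{\{p\}})\,\Lb^{\{p\}} = h\Lb^{\{p\}}\,\boldsymbol{\alpha}_{i,j}(h\Lb^{\{p\}})$, and the stage equations become a closed linear system for the stage increments $U_{i}-u_{n}$.

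Concretely, I would use $U_{1}=u_{n}$ and $\boldsymbol{\alpha}_{1,1:s}=\Zero$, stack the stages $i=2,\dots,s$ into $\mathbf{U}\coloneqq[\,U_{2}-u_{n},\dots,U_{s}-u_{n}\,]^{T}$, and gather the data of partition $p$ into the block vector $\mathbf{d}^{\{p\}}$ with first entry $\f^{\{p\}}(u_{n})$ and remaining $s-1$ entries $\g^{\{p\}}(U_{j})-\g^{\{p\}}(u_{n})$, $j=2,\dots,s$. The stage equations then read
\[
\Big(\mathbf{I}_{s-1} - \sum_{p=1}^{P} h\Lb^{\{p\}}\,\boldsymbol{\alpha}_{2:s,2:s}(h\Lb^{\{p\}})\Big)\,\mathbf{U} = h\sum_{p=1}^{P}\boldsymbol{\alpha}_{2:s,1:s}(h\Lb^{\{p\}})\,\mathbf{d}^{\{p\}},
\]
the matrix on the left being exactly $\overline{\mathbf{E}}^{-1}$; multiplying by $\overline{\mathbf{E}}$ and using $\overline{\boldsymbol{\alpha}}_{2:s,1:s}^{\{p\}}=\overline{\mathbf{E}}\,\boldsymbol{\alpha}_{2:s,1:s}(h\Lb^{\{p\}})$ reproduces the stage line of \eqref{eqn:pexpw-in-residuals}. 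Because $\boldsymbol{\alpha}_{2:s,2:s}$ is strictly lower triangular, $\overline{\mathbf{E}}$ is lower triangular with identity diagonal blocks and $\overline{\boldsymbol{\alpha}}_{2:s,2:s}^{\{p\}}$ is strictly lower triangular, so the recursion stays explicit and the inner sum truncates at $j=i-1$.

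For the output I would apply the same substitution to the last line of \eqref{eqn:exp-RK-multiple-additive}; the $\Lb^{\{p\}}(U_{j}-u_{n})$ contributions collapse into $\sum_{p} h\Lb^{\{p\}}\,\boldsymbol{\beta}_{2:s}^{T}(h\Lb^{\{p\}})\,\mathbf{U}$, and replacing $\mathbf{U}$ by $h\sum_{q}\overline{\boldsymbol{\alpha}}_{2:s,1:s}^{\{q\}}\,\mathbf{d}^{\{q\}}$ from the previous step, interchanging the order of summation, and reading off the coefficient of $\mathbf{d}^{\{p\}}$ gives
\[
\left(\overline{\boldsymbol{\beta}}_{1:s}^{\{p\}}\right)^{T} = \boldsymbol{\beta}_{1:s}^{T}(h\Lb^{\{p\}}) + \left(\sum_{q=1}^{P} h\Lb^{\{q\}}\,\boldsymbol{\beta}_{2:s}^{T}(h\Lb^{\{q\}})\right)\,\overline{\boldsymbol{\alpha}}_{2:s,1:s}^{\{p\}},
\]
which is \eqref{eqn:pexpw-in-residuals-coeffs}. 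For the closing claim, $\sum_{p} h\Lb^{\{p\}}\,\boldsymbol{\alpha}_{2:s,2:s}(h\Lb^{\{p\}})$ is a strictly lower triangular $(s-1)\times(s-1)$ array of operators, hence nilpotent of index at most $s-1$, so $\overline{\mathbf{E}}=\sum_{k=0}^{s-2}\big(\sum_{p} h\Lb^{\{p\}}\,\boldsymbol{\alpha}_{2:s,2:s}(h\Lb^{\{p\}})\big)^{k}$ is a finite sum whose entries are polynomial expressions in the entries of the matrices $\boldsymbol{\alpha}(h\Lb^{\{p\}})$.

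I expect the only genuine difficulty to be bookkeeping rather than conceptual: keeping every factor of $h$ in place, remembering that the column $1$ slot of $\boldsymbol{\alpha}$, $\boldsymbol{\beta}$, $\overline{\boldsymbol{\alpha}}$, $\overline{\boldsymbol{\beta}}$ multiplies $\f^{\{p\}}(u_{n})$ while the columns $2,\dots,s$ multiply the residual differences $\g^{\{p\}}(U_{j})-\g^{\{p\}}(u_{n})$, and respecting the order of the operator-valued products — the matrices $\boldsymbol{\alpha}(h\Lb^{\{p\}})$ for distinct partitions $p$ need not commute, so $\overline{\mathbf{E}}$ must remain to the left in $\overline{\boldsymbol{\alpha}}^{\{p\}}$ and the $h\Lb^{\{q\}}\,\boldsymbol{\beta}_{2:s}^{T}$ factors must remain to the left of $\overline{\boldsymbol{\alpha}}^{\{p\}}$ throughout the derivation.
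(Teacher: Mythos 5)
Your proposal is correct and follows essentially the same route as the paper: write the stage equations in stacked matrix form, use $\f^{\{p\}}(U_j)-\f^{\{p\}}(u_n)=\Lb^{\{p\}}(U_j-u_n)+\g^{\{p\}}(U_j)-\g^{\{p\}}(u_n)$ to move the linear terms to the left so that the coefficient matrix is $\overline{\mathbf{E}}^{-1}$, invert, and then substitute the stage increments into the output formula to read off $\overline{\boldsymbol{\beta}}^{\{p\}}$. Your explicit remarks on the commutation of $\boldsymbol{\alpha}_{i,j}(h\Lb^{\{p\}})$ with $\Lb^{\{p\}}$ and the nilpotency-based Neumann-series justification of the formal inverse only make explicit what the paper leaves implicit.
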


\begin{proof}
We seek to rewrite \eqref{eqn:exp-RK-simple-additive} in terms of residuals. For this we first write \eqref{eqn:exp-RK-simple-additive} in matrix form: 
\begin{equation*}
\begin{split}
\begin{bmatrix} U_2 - u_{n} \\  \vdots \\ U_s - u_{n}   \end{bmatrix} &=  
\sum_{p=1}^P  h\,\boldsymbol{\alpha}_{2:s,1}(h\Lb^{\{p\}}) \, \f^{\{p\}}(u_{n})   + \sum_{p=1}^P h\, \boldsymbol{\alpha}_{2:s,2:s}(h\Lb^{\{p\}})\,\begin{bmatrix} \f^{\{p\}}(U_{2}) - \f^{\{p\}}(u_{n}) \\ \vdots \\ \f^{\{p\}}(U_{s}) - \f^{\{p\}}(u_{n}) \end{bmatrix}.
\end{split}
\end{equation*} 
Next, we subtract back the linear terms from both sides:
\begin{equation*}
\begin{split}
& \bigg( \mathbf{I}_{s-1} - \sum_{p=1}^P   h\Lb^{\{p\}}\,\boldsymbol{\alpha}_{2:s,2:s}(h\Lb^{\{p\}}) \bigg) \cdot \begin{bmatrix} U_2 - u_{n} \\  \vdots \\ U_s - u_{n}  \end{bmatrix}
 =  
 \sum_{p=1}^P  h\,\boldsymbol{\alpha}_{2:s,1:s}(h\Lb^{\{p\}}) \,\begin{bmatrix} \f^{\{p\}}(u_{n})  \\ \g^{\{p\}}(U_{2}) - \g^{\{p\}}(u_{n}) \\ \vdots \\ \g^{\{p\}}(U_{s}) - \g^{\{p\}}(u_{n}) \end{bmatrix}.
\end{split}
\end{equation*} 
This, together with the coefficient definition \eqref{eqn:pexpw-in-residuals-coeffs}, gives the stage equations in  \eqref{eqn:pexpw-in-residuals}.

From \eqref{eqn:exp-RK-simple-additive} we have:
\begin{equation*}
\begin{split}
u_{n+1} &= u_{n} 
+  \sum_{p=1}^P  h\,\boldsymbol{\beta}^T(h\Lb^{\{p\}}) \,\begin{bmatrix} \f^{\{p\}}(u_{n})  \\ \g^{\{p\}}(U_{2}) - \g^{\{p\}}(u_{n}) \\ \vdots \\ \g^{\{p\}}(U_{s}) - \g^{\{p\}}(u_{n}) \end{bmatrix} +  \left(  \sum_{p=1}^P  h \Lb^{\{p\}}\,\boldsymbol{\beta}^T_{2:s}(h\Lb^{\{p\}}) \right) \,  \begin{bmatrix}  U_{2} - u_{n} \\ \vdots \\ U_{s} - u_{n} \end{bmatrix}, 
\end{split}
\end{equation*} 
which, together with the coefficient definition \eqref{eqn:pexpw-in-residuals-coeffs}, gives the next solution equation in  \eqref{eqn:pexpw-in-residuals}.
\end{proof}

\begin{example}
For example, for $s=3$, we have:
\[
\begin{split}
\overline{\mathbf{E}} = \begin{bmatrix}
\Id & 0  \\
\sum_{k=1}^P h\Lb^{\{k\}} a_{3,2}(h\Lb^{\{k\}})  & \Id
\end{bmatrix},
\end{split}
\]
\[
\overline{\boldsymbol{\alpha}}^{\{p\}} = \begin{bmatrix}
0 & 0 & 0 \\
c_2 \varphi_{1}(c_2 h\Lb^{\{p\}}) & 0 & 0  \\
c_3 \varphi_{1}(c_3 h\Lb^{\{p\}}) + \left(\sum_{k \ne p} h\Lb^{\{k\}} a_{3,2}(h\Lb^{\{k\}})\right) c_2 \varphi_{1}(c_2 h\Lb^{\{p\}}) & a_{3,2}(h\Lb^{\{p\}}) & 0 
\end{bmatrix},
\]
\[
\overline{\boldsymbol{\beta}}^{\{p\}} = \boldsymbol{\beta}^T_{1:s}(h\Lb^{\{p\}}) +  \left(  \sum_{q=1}^P  h \Lb^{\{q\}} \,\boldsymbol{\beta}^T_{2:s}(h\Lb^{\{q\}}) \right)\,\overline{\boldsymbol{\alpha}}_{2:s,1:s}^{\{p\}},
\]
where 
\[
\boldsymbol{\beta}(z) =
\begin{bmatrix}
	\varphi_{1}(z)- b_3(z)\, \left(z c_3 \varphi_{1}(c_3 z)-c_2 z^2 a_{3,2}(z) \varphi_{1}(c_2 z)\right) 
	- b_2(z)\,  z\, c_2\, \varphi_{1}(c_2 z) \\
	- b_3(z)\, z a_{3,2}(z) +b_2(z) \\
	b_3(z)
\end{bmatrix}.
\]
\end{example}

\subsection{Stability considerations}

The stability analysis of PEXPRK methods is more complex due to the splitting of the Jacobian $\Lone+\Ltwo$ into separate $\Lone$ and $\Ltwo$ parts. We discuss the cause of this complexity, and illustrate with the stability analysis of the PEXPRK method build using exponential Euler as the base scheme.

Consider the additive partitioned system in component form \eqref{eqn:component-form-additive-ode} and  perform  a linear-nonlinear splitting of the form \eqref{eqn:component-split-ode-semilinear}:
\begin{equation}
\label{eqn:additive-split-ode-semilinear}
\begin{split}
y' &= \begin{bmatrix} v' \\ w' \end{bmatrix} =
\begin{bmatrix} \fone(v + w) \\ \ftwo(v + w) \end{bmatrix} =
\begin{bmatrix}  \Lone \cdot v  + \gone(v , w) \\ \Ltwo \cdot w + \gtwo(v , w) \end{bmatrix} 
= \Lb \cdot y  + \g(y),
\end{split}
\end{equation}
where $\Lone$ and $\Ltwo$ are linear operators that contain the stiffnesses of $\fone$ and $\ftwo$, respectively, $\gone$ and $\gtwo$ are the corresponding non-linear remainder functions. 
We have
\begin{equation}
\label{eqn:nonlinear-remainder}
\Lb \coloneqq \begin{bmatrix} \Lone & \Zero \\  \Zero & \Ltwo \end{bmatrix}, \quad
\g(y) \coloneqq \begin{bmatrix} \gone(v, w) \\ \gtwo(v, w) \end{bmatrix} 
\coloneqq \begin{bmatrix} \fone(v+ w) - \Lone \cdot v \\ \ftwo(v+ w) - \Ltwo \cdot w \end{bmatrix}.
\end{equation}
%
%
The standard linear-nonlinear splitting \eqref{eqn:lin-nonlin-split-ode} of \eqref{eqn:additive-split-ode-semilinear} using the full Jacobian evaluated at $y_n$ as the linear operator \eqref{eqn:component-form-additive-ode} leads to 
\begin{equation}
\label{eqn:additive-split-ode-standard-semilinear}
\begin{split}
y' &= \begin{bmatrix} v' \\ w' \end{bmatrix} =
\begin{bmatrix} \fone(v + w) \\ \ftwo(v + w) \end{bmatrix} =
\begin{bmatrix}  \Lone \cdot (v + w)  + \g^{\textnormal{standard}, \{1\}}(v , w) \\ \Ltwo \cdot (v + w) + \g^{\textnormal{standard}, \{2\}}(v , w) \end{bmatrix}. 
\end{split}
\end{equation}
%
%
We make the following additional assumption:
\begin{assumption}
\label{ass:two-for-additive-splitting}
The system \eqref{eqn:additive-split-ode-semilinear} obeys Assumptions \ref{ass:one} and \ref{ass:two}. In particular, there is an initial condition $v_0,w_0$ with $y_0 = [v_0; w_0]$ such that each component equation \eqref{eqn:additive-split-ode-standard-semilinear} admits a sufficiently smooth solution $v,w : [0,T] \to \mathbb{R}^N$. 
\end{assumption}


The following remark shows why the PEXP stability analysis is more challenging. 
\begin{remark}
Higher derivatives of our remainder \eqref{eqn:nonlinear-remainder} and of the standard remainder \eqref{eqn:additive-split-ode-standard-semilinear}  coincide
\[
\g^{(k)}(y) = \left( \g^{\textnormal{standard}} \right)^{(k)} (y), \quad k \ge 2,
\]
and are therefore uniformly bounded if Assumption \ref{ass:two-for-additive-splitting} holds. The first derivatives, however, require a closer look. The Jacobian of the standard remainder \eqref{eqn:additive-split-ode-standard-semilinear} is
\begin{equation}
\label{eqn:jacobian-standard-remainder}
\bigl( \g^{\textnormal{standard}}\bigr)'(y) = 
\begin{bmatrix} {\J}^\one(y) - \Lone & {\J}^\one(y)-\Lone \\ {\J}^\two(y) - \Ltwo & {\J}^\two(y) - \Ltwo \end{bmatrix},
\end{equation}
and is uniformly bounded by Assumption \ref{ass:two-for-additive-splitting}.
The Jacobian of our nonlinear reminder \eqref{eqn:nonlinear-remainder}
\begin{equation}
\g'(y) = 
\begin{bmatrix} {\J}^\one(y)-\Lone & {\J}^\one(y) \\ {\J}^\two(y) & {\J}^\two(y) - \Ltwo \end{bmatrix}
\end{equation}
is not, due to the presence of the stiff Jacobians (and not Jacobian differences) in off-diagonal positions. 
\end{remark}
While the residual Jacobian is not nicely bounded, its product with an analytical matrix function is.
\begin{remark}
The product of a function $h \varphi_k(h\Lb)$ with $k \ge 1$ with the residual Jacobian reads:
\begin{equation}
\label{eqn:phi-jac}
\begin{split}
h \varphi_k(h\Lb)\,\g'(u) &= 
\begin{bmatrix} h \varphi_k(h \Lone) & \Zero \\  \Zero & h \varphi_k(h \Ltwo) \end{bmatrix} \cdot
\begin{bmatrix} {\J}^\one-\Lone & {\J}^\one \\ {\J}^\two & {\J}^\two - \Ltwo \end{bmatrix} \\
&= 
\begin{bmatrix} h \varphi_k(h \Lone)\, \left( {\J}^\one-\Lone \right) & h \varphi_k(h \Lone)\,{\J}^\one \\ h \varphi_k(h \Ltwo)\,{\J}^\two & h \varphi_k(h \Ltwo)\, \left( {\J}^\two - \Ltwo \right) \end{bmatrix}.
\end{split}
\end{equation}
The diagonal blocks are uniformly bounded by Assumption \ref{ass:two-for-additive-splitting}. The off-diagonal blocks are also uniformly bounded, since each term on the right of
\begin{equation}
\label{eqn:phi-jac-bound}
\begin{split}
h \varphi_k(h \Lone)\,{\J}^\one &= h \varphi_k(h \Lone)\, \left( {\J}^\one-\Lone \right) + \varphi_k(h \Lone)\,h \Lone \\
&= h \varphi_k(h \Lone)\, \left( {\J}^\one-\Lone \right) +  \left(\varphi_{k-1}(h \Lone) - \frac{\Id}{(k - 1)!}\right),
\end{split}
\end{equation}
is uniformly bounded.
\end{remark}

Consider the exponential Euler method applied to \eqref{eqn:additive-split-ode-semilinear}--\eqref{eqn:nonlinear-remainder}:
\begin{equation*}
\begin{split}
\begin{bmatrix} v_{n+1} \\ w_{n+1} \end{bmatrix} &= \begin{bmatrix} v_{n} \\ w_{n} \end{bmatrix} +
\begin{bmatrix} h \varphi_1(h \Lone) & \Zero \\  \Zero & h \varphi_1(h \Ltwo) \end{bmatrix} \cdot \begin{bmatrix} \fone(v_{n} + w_{n}) \\ \ftwo(v_{n} + w_{n}) \end{bmatrix} \\
&=
\begin{bmatrix} v_{n} \\ w_{n} \end{bmatrix} +
\begin{bmatrix}  h \varphi_1(h \Lone) \, \left( \Lone (v_{n} + w_{n}) + \g^{\textnormal{standard}\,\{1\}}(v_{n}+w_{n})  \right) \\ h \varphi_1(h \Ltwo)\,\left( \Ltwo (v_{n} + w_{n}) + \g^{\textnormal{standard}\,\{2\}}(v_{n}+w_{n})  \right) \end{bmatrix},
\end{split}
\end{equation*}
and, after adding the component equations together,
\begin{equation}
\label{eqn:exp-Euler}
\begin{split}
u_{n+1}  &= u_{n}  +
  h \varphi_1(h \Lone) \, \left( \Lone \, u_{n} + \g^{\textnormal{standard}\,\{1\}}(u_{n})  \right) 
  + h \varphi_1(h \Ltwo)\,\left( \Ltwo \, u_{n} + \g^{\textnormal{standard}\,\{2\}}(u_{n})  \right) \\
  &= \left( e^{h \Lone} + e^{h \Ltwo} - \Id \right)\,u_{n}  + h \varphi_1(h \Lone) \, \g^{\textnormal{standard}\,\{1\}}(u_{n})
  + h \varphi_1(h \Ltwo)\, \g^{\textnormal{standard}\,\{2\}}(u_{n}).
\end{split}
\end{equation}
Assume, for simplicity, that $h$, $\Lone$, and $\Ltwo$ remain the same throughout the integration, and let 
\begin{equation}
\label{eqn:exp-Euler-M}
\mathbf{M} \coloneqq e^{h \Lone} + e^{h \Ltwo} - \Id.
\end{equation}
The difference of two solutions of the exponential Euler scheme \eqref{eqn:exp-Euler} started with initial conditions $u_0$ and $u_0+\Delta u_0$ is:
\begin{equation*}
\begin{split}
\Delta u_{n+1}  
  &= \mathbf{M}\,\Delta u_{n}  + h \varphi_1(h \Lone) \, \Delta\g^{\textnormal{standard}\,\{1\}}(u_{n})
  + h \varphi_1(h \Ltwo)\, \Delta\g^{\textnormal{standard}\,\{2\}}(u_{n}) \\
  &= \mathbf{M}^n\,\Delta u_{0} + h \,\sum_{k=0}^n \mathbf{M}^k\,\varphi_1(h \Lone) \, \Delta\g^{\textnormal{standard}\,\{1\}}(u_{n-k}) \\
  &\quad + h \,\sum_{k=0}^n \mathbf{M}^k\,\varphi_1(h \Ltwo) \, \Delta\g^{\textnormal{standard}\,\{2\}}(u_{n-k}) \\
\Vert \Delta u_{n+1} \Vert &\le \Vert  \mathbf{M}^n\Vert \, \Vert \Delta u_{0} \Vert 
+ h \,\left( \Vert \varphi_1(h \Lone)\Vert  \, \ell^{\{1\}} + \Vert \varphi_1(h \Ltwo)\Vert  \, \ell^{\{1\}}\right)\,\sum_{k=0}^n \Vert \mathbf{M}^k\Vert \,\Vert \Delta u_{n-k}\Vert,  
\end{split}
\end{equation*}
where $\ell^{\{i\}}$ is the (small) Lipschitz constant of the smooth function $\g^{\textnormal{standard}\,\{i\}}$ for $i=1,2$. The exponential Euler scheme \eqref{eqn:exp-Euler} is stable if the matrix $\mathbf{M}$ defined in \eqref{eqn:exp-Euler-M} is power bounded.

%
%
\section{Particular methods}
\label{sec:particular_methods}
We now turn our attention to building particular methods of partitioned exponential Runge--Kutta type. We start with known exponential Runge--Kutta methods \eqref{eqn:exp-RK} that satisfy the stiff order conditions given in Table \ref{tbl:expRK-order-conditions} (for up to order four). Next we use Theorem \ref{thm:transformation_theorem} to obtain coefficients of the \textit{transformed} exponential Runge--Kutta method \eqref{eqn:exp-RK-simple}. Finally, we apply the \textit{transformed} method to construct PEXPRK schemes \eqref{eqn:exp-RK-simple-additive}. Figure \ref{fig:pexprk_construction_process} illustrates this procedure.

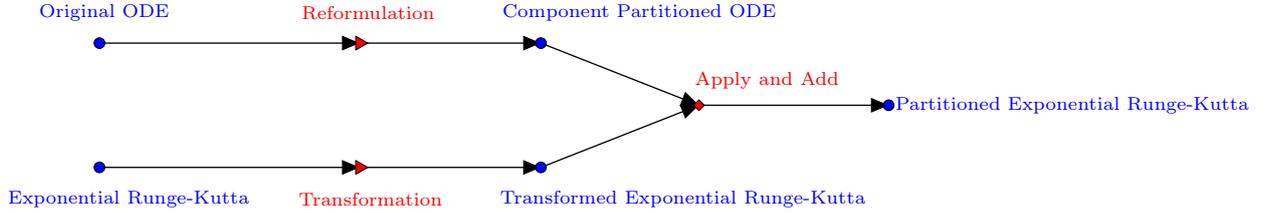
\begin{figure}[htb!]
\definecolor{ffqqqq}{rgb}{1,0,0}
\definecolor{rvwvcq}{rgb}{0,0,1}
\begin{tikzpicture}[line cap=round,line join=round,>=triangle 45,x=1cm,y=1cm,scale=0.8]
\draw [->,line width=0.5pt] (-12.8,-0.07) -- (-8.52,-0.07);
\draw [->,line width=0.5pt] (-8.52,-0.07) -- (-5.54,-0.07);
\draw [->,line width=0.5pt] (-12.8,1.99) -- (-8.52,1.99);
\draw [->,line width=0.5pt] (-8.52,1.99) -- (-5.54,1.99);
\draw [->,line width=0.5pt] (-5.54,1.99) -- (-2.94,0.96);
\draw [->,line width=0.5pt] (-5.54,-0.07) -- (-2.94,0.96);
\draw [->,line width=0.5pt] (-2.94,0.96) -- (0.18,0.96);
\begin{scriptsize}
\draw [fill=rvwvcq] (-12.8,1.99) circle (2.5pt);
\draw[color=rvwvcq] (-12.72,2.5) node {Original ODE};
\draw [fill=rvwvcq] (-12.8,-0.07) circle (2.5pt);
\draw[color=rvwvcq] (-12.32,-0.6) node {Exponential Runge-Kutta};
\draw [fill=ffqqqq,shift={(-8.52,-0.07)},rotate=270] (0,0) ++(0 pt,3.75pt) -- ++(3.2475952641916446pt,-5.625pt)--++(-6.495190528383289pt,0 pt) -- ++(3.2475952641916446pt,5.625pt);
\draw[color=ffqqqq] (-8.34,-0.6) node {Transformation};
\draw [fill=rvwvcq] (-5.54,-0.07) circle (2.5pt);
\draw[color=rvwvcq] (-3.2,-0.6) node {Transformed Exponential Runge-Kutta};
\draw [fill=ffqqqq,shift={(-8.52,1.99)},rotate=270] (0,0) ++(0 pt,3.75pt) -- ++(3.2475952641916446pt,-5.625pt)--++(-6.495190528383289pt,0 pt) -- ++(3.2475952641916446pt,5.625pt);
\draw[color=ffqqqq] (-8.38,2.5) node {Reformulation};
\draw [fill=rvwvcq] (-5.54,1.99) circle (2.5pt);
\draw[color=rvwvcq] (-3.92,2.5) node {Component Partitioned ODE};
\draw [fill=ffqqqq] (-2.94,0.96) ++(-2.5pt,0 pt) -- ++(2.5pt,2.5pt)--++(2.5pt,-2.5pt)--++(-2.5pt,-2.5pt)--++(-2.5pt,2.5pt);
\draw[color=ffqqqq] (-1.82,1.38) node {Apply and Add};
\draw [fill=rvwvcq] (0.18,0.96) circle (2.5pt);
\draw[color=rvwvcq] (3.2,0.96) node {Partitioned Exponential Runge-Kutta};
\end{scriptsize}
\end{tikzpicture}
\caption{Procedure to build a partitioned exponential Runge--Kutta method}
\label{fig:pexprk_construction_process}
\end{figure}

We build partitioned exponential Runge--Kutta schemes of orders two, three and four. For the second order scheme, we go all the way and build a partitioned method. For the third and fourth order, we only provide the coefficients of the original and the \textit{transformed} exponential Runge--Kutta method. It is then straightforward to build a partitioned method with the coefficients of the \textit{transformed} method.

\subsection{Second order partitioned exponential Runge--Kutta scheme}
We start with the second order exponential Runge-Kutta scheme \eqref{eqn:exp-RK} given by the coefficients \cite[Section 5.1]{Hochbruck_2005_expRK}:
\[
\boldsymbol{c} = \begin{bmatrix} 0 \\ 1 \end{bmatrix}, \quad
\boldsymbol{a}(z) = \begin{bmatrix} 0 & 0 \\ \varphi_1(z) & 0 \end{bmatrix}, \quad
\boldsymbol{b}(z) = \begin{bmatrix} \varphi_1(z)-\varphi_2(z) \\ \varphi_2(z) \end{bmatrix},
\]
that satisfy order conditions 1., 2a., and 2b. in Table \ref{tbl:expRK-order-conditions}. Here, $s = 2$, $\mathbf{A}_{2:s,2:s} = 0$ and $\mathbf{E}_{2:s,2:s} = 1$. The coefficients of the transformed method are as follows:
\[
    \begin{split}
\boldsymbol{\alpha}(z) = \begin{bmatrix} 0 & 0 \\
\varphi_1(z) & 0 \end{bmatrix}, \qquad
\boldsymbol{\beta}(z) = \begin{bmatrix} \varphi_1(z) - z \varphi_2(z)\varphi_1(z) \\
\varphi_2(z) \end{bmatrix}.
    \end{split}
\]
The partitioned implementation \eqref{eqn:exp-RK-simple-additive} of the \textit{transformed} method reads:
\begin{equation}
\label{eqn:partitioned-exp2}
\begin{split}
U_1 &= u_n, \\
U_2 
&=  u_n + h \,\sum_{p=1}^2 \varphi_{1}(h \Lp)\, \fp(u_n),\\
u_{n+1}   
 &= u_{n} 
+  h \,\sum_{p=1}^2 \left( 2\varphi_{1}(h \Lp) - \varphi^2_{1}(h \Lp)\right)\, \fp(u_n)   \\
& \quad + h \,\sum_{p=1}^2  \varphi_2(h \Lp) \, \Big( \fp(U_2) - \fp(u_n) \Big).
\end{split}
\end{equation}
As in Theorem \ref{thm:pexprk-in-residuals}, the method \eqref{eqn:partitioned-exp2} can be written in terms of residuals as follows:
\begin{equation}
\label{eqn:partitioned-exp2-solution-residuals}
\begin{split}
u_{n+1}   
&= u_{n} +  h \,\sum_{p=1}^2 \left( \sum_{p'\ne p} \varphi_{1}(h \Lb^{\{p'\}})\right)\, \varphi_{1}(h \Lp)\, \fp(u_n)   \\
& \quad + h \,\sum_{p=1}^2  \varphi_2(h \Lp) \, \Big( \gp(U_2) - \gp(u_n) \Big).
\end{split}
\end{equation}

\subsection{Third order partitioned exponential Runge--Kutta scheme}

Consider the third order method by Hochbruck and Ostermann \cite{Hochbruck_2005_expRK} given by the Butcher tableau:

\[
\renewcommand\arraystretch{1.5}
\begin{array}{l|cccc}
    0 & 0 & 0 & 0 & \\[8pt]
    c_2 & c_2\varphi_{1,2} & 0 & 0 &\\[8pt]
    \tfrac{2}{3} & \tfrac{2}{3}\varphi_{1,3} - \tfrac{4}{9c_2}\varphi_{2,3}  & \tfrac{4}{9c_2}\varphi_{2,3} & 0 & \\[8pt]
    \hline
    & \varphi_{1} - \tfrac{3}{2} \varphi_{2} & 0 & \tfrac{3}{2} \varphi_{2} & \\
\end{array}.
\]

Here, $\varphi_{j} \equiv \varphi_j(z)$ and $\varphi_{j, k} \equiv \varphi_j(c_k  z)$. For $c_2 = 2/3$, the \textit{transformed} method coefficients, upon applying Theorem \ref{thm:transformation_theorem}, are given shown in Method \ref{method:3rdorder_transformed_unpartitioned}.

\begin{method}
    \begin{equation*}
    \begin{split}        
    \boldsymbol{\alpha} = \begin{bmatrix}
    0 & 0 & 0 \\
                      \frac{2 \varphi_{1,2}}{3} &   0 & 0 \\
                      \frac{2}{9} \left(3 \varphi_{1,3} - 2z \varphi_{1,2} \varphi_{2,3}\right) &  \frac{2 \varphi_{2,3}}{3} & 0 \\
            \end{bmatrix}, \\[8pt]
    \boldsymbol{\beta} = \begin{bmatrix}
    \frac{1}{3} z \varphi_2 \left(2 z \varphi_{1,2} \varphi_{2,3}-3 \varphi_{1,3}\right)+\varphi_1(z) \\
        -z \varphi_2 \varphi_{2,3}    \\
        \frac{3 \varphi_2}{2} 
        \end{bmatrix}.
    \end{split}
\end{equation*}
\caption{Third order transformed, unpartitioned method. \label{method:3rdorder_transformed_unpartitioned}}
\end{method}

\subsection{Fourth order partitioned exponential Runge--Kutta scheme}

We consider next the fourth order method by Hochbruck and Ostermann \cite{Hochbruck_2005_expRK} given by the Butcher tableau:
\[
\renewcommand\arraystretch{1.5}
\begin{array}{l|ccccc}
    0 & 0 & 0 & 0 & 0 & 0\\[8pt]
    \tfrac{1}{2} & \tfrac{1}{2}\varphi_{1,2} & 0 & 0 & 0 & 0\\[8pt]
    \tfrac{1}{2} & \tfrac{1}{2}\varphi_{1,3} - \varphi_{2,3}  & \varphi_{2,3} & 0 & 0 & 0\\[8pt]
    1 & \varphi_{1,4} - 2 \varphi_{2,4} & \varphi_{2,4} & \varphi_{2,4} & 0 & 0\\[8pt]
    \tfrac{1}{2} & \tfrac{1}{2}\varphi_{1,5} - 2 a_{5,2} - a_{5,4} & a_{5,2} & a_{5,2} & \tfrac{1}{4} \varphi_{2,5} - a_{5,2} & 0\\[8pt]\hline
        & \varphi_{1} - 3 \varphi_{2} + 4 \varphi_{3} & 0 & 0 & -\varphi_{2} + 4 \varphi_{3} & 4 \varphi_{2} - 8 \varphi_{3}\\
\end{array},
\]
Applying Theorem \ref{thm:transformation_theorem} the transformed method has the coefficients given in Method \ref{method:4thorder_transformed_unpartitioned}.

\begin{method}
    \footnotesize
    \begin{equation*}
    \begin{split}        
        \alpha_{2:s,1} &= \begin{bmatrix}
        \frac{\varphi_{1,2}}{2}\\
        \frac{1}{2} (\varphi_{1,3}-z \varphi_{1,2} \varphi_{2,3})\\
        \frac{1}{2} z   \varphi_{2,4} (\varphi_{1,2} (z \varphi_{2,3}-1)-\varphi_{1,3})+\varphi_{1,4}\\
        \frac{1}{8}
        \bigg(2 z \varphi_{1,4} (\varphi_{2,4}+\varphi_{2,5}-2 (2 \varphi_{3,4}+\varphi_{3,5}))\\
        +z
        \varphi_{1,3} (-\varphi_{2,4} (z (\varphi_{2,4}+\varphi_{2,5}-2 (2 \varphi_{3,4}+\varphi
        _{3,5}))+1)-2 \varphi_{2,5}+4 \varphi_{3,4}+2 \varphi_{3,5})\\
        +z \varphi_{1,2} (z
        \varphi_{2,3}-1) (\varphi_{2,4} (z (\varphi_{2,4}+\varphi_{2,5}-2 (2 \varphi_{3,4}+\varphi
        _{3,5}))+1)+2 (\varphi_{2,5}-2 \varphi_{3,4}-\varphi_{3,5}))
        +4 \varphi
        _{1,5}\bigg)
    \end{bmatrix},\\[8pt]
    \alpha_{2:s,2:s} &= \begin{bmatrix}
             0 & 0 & 0 & 0 \\[8pt]
             \varphi_{2,3} & 0 & 0 & 0 \\[8pt]
             (1-z \varphi_{2,3}) \varphi_{2,4} & \varphi_{2,4} & 0 & 0 \\[8pt]
             -\frac{1}{4} (z \varphi_{2,3}-1) \bigg(2 (\varphi_{2,5}-2 \varphi_{3,4}-\varphi_{3,5}) 
              & \frac{1}{4} \bigg(z (\varphi_{2,4}+\varphi_{2,5}-2 (2 \varphi
               _{3,4}+\varphi_{3,5})) 
               & \frac{1}{4} \bigg(-\varphi_{2,4}-\varphi_{2,5}
                & 0 \\
               + \varphi_{2,4} (z (\varphi_{2,4}+\varphi_{2,5}-2 (2 \varphi_{3,4}+\varphi_{3,5}))+1)\bigg) & \varphi_{2,4}+\varphi_{2,4}+2 \varphi_{2,5}-4 \varphi_{3,4}-2 \varphi_{3,5}\bigg)  & +4 \varphi_{3,4}+2 \varphi_{3,5}\bigg) &
            \end{bmatrix},\\[8pt]
    \beta_{1} &= \varphi_1(z)-\frac{1}{2} z \biggl((\varphi_2-2 \varphi_3) \bigl(2 z \varphi_{1,4} (\varphi_{2,4}+\varphi
    _{2,5}-2 (2 \varphi_{3,4}+\varphi_{3,5}))\\
&\quad    +z \varphi_{1,3} (-\varphi_{2,4} (z
    (\varphi_{2,4}+\varphi_{2,5}-2 (2 \varphi_{3,4}+\varphi_{3,5}))+1)-2 \varphi_{2,5}+4
    \varphi_{3,4}+2 \varphi_{3,5})\\
&\quad    +z \varphi_{1,2} (z \varphi_{2,3}-1) (\varphi_{2,4} (z
    (\varphi_{2,4}+\varphi_{2,5}-2 (2 \varphi_{3,4}+\varphi_{3,5}))+1)+2 (\varphi
    _{2,5}-2 \varphi_{3,4}-\varphi_{3,5}))+4 \varphi_{1,5}\bigr)\\
&\quad    -(\varphi_2-4 \varphi_3)
    (z \varphi_{2,4} (\varphi_{1,2} (z \varphi_{2,3}-1)-\varphi_{1,3})+2 \varphi
    _{1,4})\biggr),\\[8pt]     
   \beta_{2} &=  \frac{1}{4} z \left(z \varphi_{2,3}-1\right) \Big(4 \left(\varphi_2-2 \varphi_3\right) \left(\varphi_{2,4}
    \left(z \left(\varphi_{2,4}+\varphi_{2,5}-2 \left(2 \varphi_{3,4}+\varphi_{3,5}\right)\right)+1\right)+2
    \left(\varphi_{2,5}-2 \varphi_{3,4}-\varphi_{3,5}\right)\right) \\
    &\quad -4 \left(\varphi_2-4 \varphi_3\right) \varphi
    _{2,4}\Big),\\[8pt]
    \beta_{3:s} &= \begin{bmatrix}    z \left(\varphi_2-4 \varphi_3\right) \varphi_{2,4}+z \left(\varphi_2-2 \varphi_3\right) \left(-\varphi
    _{2,4} \left(z \left(\varphi_{2,4}+\varphi_{2,5}-2 \left(2 \varphi_{3,4}+\varphi_{3,5}\right)\right)+1\right)-2
    \varphi_{2,5}+4 \varphi_{3,4}+2 \varphi_{3,5}\right)\\
    z \left(\varphi_2-2 \varphi_3\right) \left(\varphi_{2,4}+\varphi
    _{2,5}-2 \left(2 \varphi_{3,4}+\varphi_{3,5}\right)\right)-\varphi_2+4 \varphi_3\\
    4 \left(\varphi_2-2 \varphi
    _3\right) \end{bmatrix}.
    \end{split}
\end{equation*}
\caption{Fourth order transformed, unpartitioned method. \label{method:4thorder_transformed_unpartitioned}}
\end{method}

%
\section{Computational considerations}
\label{sec:computational-optimizations}

The most expensive computation, outside function and Jacobian evaluation, in one step of an exponential integrator is the evaluation of the $\varphi_k$ function \eqref{eqn:phi_function_definition} times vector products, which involves evaluating the matrix exponential operator. There is a significant body of work in computing matrix exponentials as evidenced in the review article by Moler et al. \cite{moler2003}. A number of authors have made additional contributions to the topic  \cite{niesen2012,al-mohy2011,bader2015,higham2005,higham2009,schmelzer2007,skaflestad2009,almohy2010}. 

In our implementation of exponential Runge--Kutta methods, we evaluate the $\varphi_k(h \gamma J) v$ products using Krylov-subspace based approximations. If $N$ is the number of variables in the state-space, we use the modified Arnoldi algorithm \cite{vorst2003} to build an $M$-dimensional Krylov-subspace ($M \ll N$):  
\[
    \mathcal{K}_{M} = \{v, \J v, \J^2 v, \J^3 v, \hdots, \J^{M - 1} v\},
\]
returning an ${N \times M}$ orthonormal matrix $V$ that spans $\mathcal{K}_{M}$ and an ${M \times M}$ upper-Hessenberg matrix, $H_M$. Using these, the $\varphi_k$ product is approximated according to \cite{Sidje_1998,Sandu_2014_expK,Sandu_2019_EPIRKW,saad1992,hochbruck1997}: 
\[
    \varphi_k(h\gamma J) v \approx \norm{v} V_M \varphi_{k}(h \gamma H_M) e_1. 
\] 
We implement an adaptive Arnoldi algorithm \cite{saad1992} that evaluates the error at $m$-th iteration, $s_m = \varphi_1(h \gamma J) v - \norm{v} V_m \varphi_{k}(h \gamma H_m) e_1$, and stopping when the error under a chosen tolerance.  Even though we only evaluate the error for $\varphi_1$, we argue that this should be sufficient for higher order $\varphi_k$ functions as they must converge faster according to their series expansion. Furthermore, since the computation of error to limit the size of the subspace (and the number of iterations of Arnoldi) in itself relies on performing a matrix exponential on the upper-Hessenberg matrix $H_m$, we only compute the error at some pre-determined indices \cite[Section 6.4]{Hochbruck_1998_exp}, where the cost of computing the error roughly doubles the total cost of all preceding error computations.

In the remainder of this section, we attempt to establish guidelines for when partitioned exponential methods will be computationally more efficient per timestep that traditional exponential methods applied to the full system.

\textit{\textbf{Cost of unpartitioned method.}}
The cost of an Arnoldi process, excluding the cost of error estimation code, to construct $V_M$ and $H_M$ is $\landauO\left(MN^2 + M^2N + MN\right)$, assuming that the matrices are dense and a matrix vector product in the full space requires $\landauO(N^2)$ operations.. Since we only compute the error at indices where the cost of computing the error roughly doubles the cost of all previous error computations, the total cost of computing the error after $M$ iterations is $\landauO\left(M^3\right)$. This cost comes from the fact that the error estimation routine requires  the exponentiation of the $H_M$ matrix, with a constant number of rows and columns augmented around it which we ignore in its cost. 
    
The total cost per timestep of an unpartitioned exponential method that evaluates $u$ $\varphi$ matrix function vector products is $\landauO\left(u \times (M N^2 + M^2 N + M N + M^3)\right)$ (ignoring a constant number of vector adds). Since $N \gg M$ the cost is approximately $\sim u\,M\, N^2$.

\textit{\textbf{Cost of component partitioned method.}} Consider a $P$-way component partitioned system with components of size $N_i$ , $i=1,\dots,P$, respectively, such that $N = N_1 + \dots + N_P$. Consider a partitioned method that performs $p$ matrix-vector operations on each partition.  Assume next that the stiff components are divided proportionately amongst the partitions, and the Arnoldi processes need the same number of $M$ iterations to converge. The cost per step of the partitioned method is $\sim p\,M\, N_1^2 + \dots + p\,M\, N_P^2$. The partitioned method is more effective than the full method when 
\[
p < u \frac{\left(\sum_{i=1}^P N_i\right)^2}{\sum_{i=1}^P N_i^2}.
\]
When $N_i = N/P$ the partitioned method is more effective when $p < u/P^2$.

\textit{\textbf{Cost of additively partitioned method.}} The analytical functions are applied to matrices $h\Lp$ of the same size. Computational savings are realized due to the simpler structure of individual matrices. First, the number of Krylov iterations for computing each individual $\varphi_k(h\Lp)\,v$ can be much smaller than that of the full, more complex Jacobian $\varphi_k(h\Lb)\,v$. Next, the sparsity structure of individual matrices can be much more favorable than the sparsity structure of the sum. Individual component matrices can have special structure (such as block diagonal) that  can further speed up the calculations.

Further computational optimizations specific to the application of partitioned exponential methods to reaction-diffusion systems are discussed in greater detail in our earlier work \cite{narayanamurthi2019}.
%

\section{Results}
\label{sec:results}

We perform fixed timestep numerical experiments using each exponential Runge--Kutta method described in Section \ref{sec:particular_methods} in the original formulation, transformed formulation, and partitioned formulation (for a test-specific partitioning). In the results that follow, we use the following naming convention for the methods. 

Labels starting with prefix ``exprks'' stand for unpartitioned, exponential Runge--Kutta scheme.  Likewise, labels starting with ``pexprks'' stand for partitioned, exponential Runge--Kutta scheme. We also use the keywords ``orig'' and ``tran'' to indicate whether the unpartitioned methods are using the original coefficients or the {transformed} coefficients. Following this, we specify the design order of the method with the phrase ``$\text{order}\, = \, \text{design order}$'' directly in the label itself. For example, ``$\text{order}\, = \, 2$'' in the label corresponds to the second-order method from Section \ref{sec:particular_methods}. Lastly, we specify the structure of the Jacobian used with methods implemented using the transformed coefficients, i.e., whether it's a full or a block Jacobian. We use the full Jacobian with the original methods but do not specify it in the label. To summarize, the nomenclature for labeling each method in the plots is as follows:
\[
    {\big[\text{exprks or pexprks}\big]} \_
    {\big[\text{orig or tran}\big]}   \_
    {\big[\text{order = 2 or 3 or 4}\big]} \_
    {\big[\text{(Full or Block Jacobian)}\big]}. 
\]

\subsection{Gray-Scott equations} 

The Gray-Scott equations \cite{gray1984}:
\begin{equation}
	\begin{split}
	\pfrac{a}{t} &= d_a \nabla^2 a  -  a b^2 + f(1 - a)  ,\\
	\pfrac{b}{t} &= d_b \nabla^2 b  +  a b^2 - (f + k)b ,\\
	\end{split}
	\label{eqn:gray-scott}
\end{equation}
model the diffusion and reaction of two species $a$ and $b$ over a spatial domain where the chemical reactions are:
\begin{equation*}
	\begin{split}
	A + 2B &\rightarrow C  ,\qquad
	B \rightarrow C.
    \end{split}
\end{equation*}  
In equation \eqref{eqn:gray-scott} $a$, $b$ are the concentrations of the two chemical species, $d_a$ and $d_b$ are the diffusion coefficients of the species, $f$ is the feed rate of $a$, and $f + k$ is drain rate of $b$. Here we use the parameter values $f = 0.04$, $k = 0.06$, $d_a = 2$, $d_b = 1$.

The spatial domain is $[0, 1] \times [0, 1]$ and the timespan for experiments is $[0, 0.262144]$. Initial conditions for the model are given by the expressions $v(x,y,0) = 0.4 + 0.1 (x+y) + 0.1 (\sin(10*x) \sin(20*y))$ and $w(x,y,0) = 0.4 + 0.1 (x+y) + 0.1 (\cos(10*x) \cos(20*y))$. The boundary conditions are periodic.

For our experiments, we use a 2D finite difference discretized Gray-Scott model with a grid resolution of $300 \times 300$. Each method was tested for the set of stepsizes given by the MATLAB expression \texttt{0.262144 .* 2.^{(-1:-1:-6)}},  which roughly corresponds to the decimal range \num{1e-1} to \num{4e-3}. The reference solution was computed using ODE15s with absolute and relative tolerance set to \num{1.0e-14}. Four different partitions of the discretized model were tested. For all experiments we used the additive implementation of the partitioned methods.

\subsection{Component partitioning by chemical species\label{sec:component_partition_by_species}}

We first consider a component partitioning of \eqref{eqn:gray-scott}, where each component is associated with a different chemical species. In equation \eqref{eqn:component-form-additive-ode} $v$ and $w$ variables corresponding to this splitting are
\begin{equation}
\label{eqn:GS-partitioning-species}
v = \begin{bmatrix}
\bar{a}\\
\mathbf{0}
\end{bmatrix}, \quad w = \begin{bmatrix}
\mathbf{0} \\
\bar{b}
\end{bmatrix},
\end{equation}
where $\bar{a}$ and $\bar{b}$ are the state variables corresponding to the chemical species $A$ and $B$ after spatial discretization. With $\mathds{L}$ representing the discrete Laplacian operator, and $\mathds{1}$, a column of ones of appropriate dimension, the right-hand side functions $\fone$ and $\ftwo$ can be written as 
\[
\fone(v + w) = \begin{bmatrix}
d_a * \mathds{L} * \bar{a} - \bar{a} \odot \bar{b} \odot \bar{b} + f * (\mathds{1} - \bar{a}) \\
\mathbf{0}
\end{bmatrix},\quad
\ftwo(v + w) = \begin{bmatrix}
\mathbf{0}\\
d_b * \mathds{L} * \bar{b} + \bar{a} \odot \bar{b} \odot \bar{b} - (f + k) * \bar{b}
\end{bmatrix}.
\]
The corresponding linear operators in \eqref{eqn:exp-RK-simple-additive} are  
\[
\Lone = \begin{bmatrix}
d_a * \mathds{L} - \textnormal{diag}(\bar{b} \odot \bar{b}) - f * \mathds{I} & \mathbf{0}\\
\mathbf{0} & \mathbf{0}
\end{bmatrix}, \quad
\Ltwo = \begin{bmatrix}
\mathbf{0} & \mathbf{0} \\
\mathbf{0} & d_b * \mathds{L} + 2 * \textnormal{diag}(\bar{a} \odot \bar{b}) - (f + k) * \mathds{I}
\end{bmatrix},
\]
where $\mathds{I}$ is the identity matrix and $\textnormal{diag}$ is a function that returns a diagonal matrix with arguments placed along the main diagonal. 

Figure \ref{fig:gray-scott-results-var-partitioned} shows the results of the fixed timestep experiments. All methods show the correct order of convergence. As a validation of the \textit{transformation} Theorem \ref{thm:transformation_theorem}, we observe that the original and transformed methods converge at the same rate as expected.

\begin{figure}[htb!]
	\centering
    \includegraphics[scale=0.4]{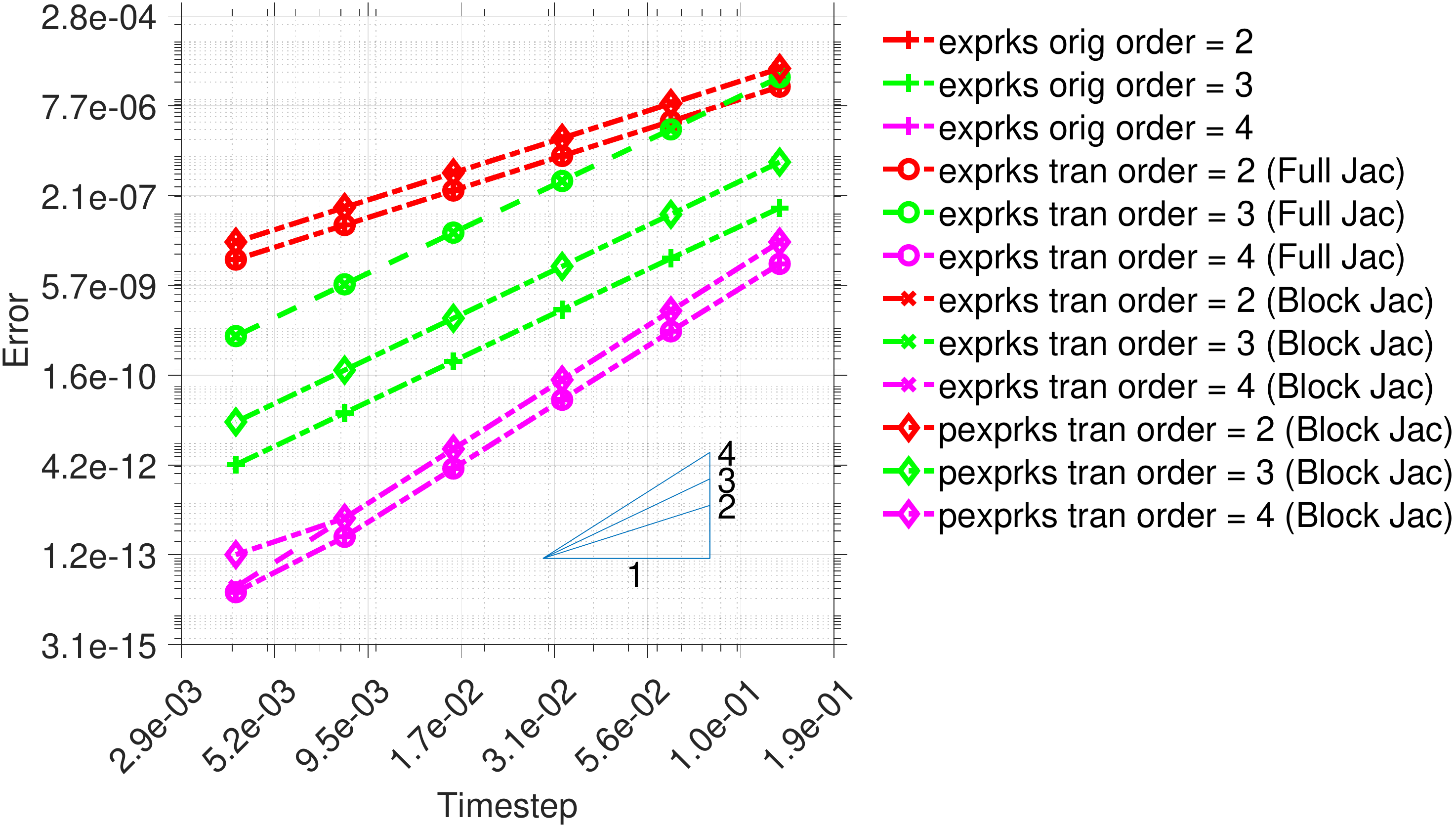}
    \caption{Gray-Scott equations \eqref{eqn:gray-scott}  partitioned by chemical species \eqref{eqn:GS-partitioning-species}.}
    \label{fig:gray-scott-results-var-partitioned}
\end{figure}

\begin{remark}
	For component partitioned systems, an unpartitioned exponential Runge--Kutta scheme using a block-diagonal Jacobian, $\Lb$, implemented as a \textit{transformed} method is equivalent to its partitioned counterpart. If we set 
		\begin{equation}
		\label{eqn:GS-full-state}
		\begin{split}
		& \bar{Y} =\begin{bmatrix} \bar{a} \\ \bar{b} \end{bmatrix}, \quad 
		f(\bar{Y}) = \begin{bmatrix}
		d_a * \mathds{L} * \bar{a} - \bar{a} \odot \bar{b} \odot \bar{b} + f * (\mathds{1} - \bar{a})\\
		d_b * \mathds{L} * \bar{b} + \bar{a} \odot \bar{b} \odot \bar{b} - (f + k) * \bar{b}
		\end{bmatrix}, \\
		& \Lb = \begin{bmatrix}
		d_a * \mathds{L} - \textnormal{diag}(\bar{b} \odot \bar{b}) - f * \mathds{I} & \mathbf{0}\\
		\mathbf{0} & d_b * \mathds{L} + 2 * \textnormal{diag}(\bar{a} \odot \bar{b}) - (f + k) * \mathds{I}
		\end{bmatrix},
		\end{split}
		\end{equation}
		the results are identical to those obtained by the partitioned methods as shown in Figure \ref{fig:gray-scott-results-var-partitioned}. 
\end{remark}

\subsection{Component partitioning by splitting the spatial domain\label{sec:component_partition_by_space}}

In this experiment, we partition the variables by their spatial location into two domains as shown in Figure \ref{fig:grayscott_spatial_partition}. ODE functions and linear operators are updated accordingly. In order to achieve this, we make use of permutation matrices which reorder the state variables first by grid location and then by species, such that all variables in subdomain $\Omega_1$ are the first $N/2$ variables in the permuted state vector, and the variables in subdomain $\Omega_2$ are the last $N/2$ ones.
%
%
\begin{figure}[htb!]
	\centering
	\definecolor{ududff}{rgb}{0.30196078431372547,0.30196078431372547,1.}
\definecolor{zzttqq}{rgb}{0.6,0.2,0.}
\definecolor{cqcqcq}{rgb}{0.7529411764705882,0.7529411764705882,0.7529411764705882}
\begin{tikzpicture}[line cap=round,line join=round,>=triangle 45,x=1.0cm,y=1.0cm,scale=0.5]
\fill[line width=2.pt,color=zzttqq,fill=zzttqq,fill opacity=0.10000000149011612] (-2.,-2.) -- (6.,-2.) -- (6.,6.) -- (-2.,6.) -- cycle;
\draw [line width=2.pt,color=zzttqq] (-2.,-2.)-- (6.,-2.);
\draw [line width=2.pt,color=zzttqq] (6.,-2.)-- (6.,6.);
\draw [line width=2.pt,color=zzttqq] (6.,6.)-- (-2.,6.);
\draw [line width=2.pt,color=zzttqq] (-2.,6.)-- (-2.,-2.);
\draw [line width=2.pt] (-2.,2.)-- (6.,2.);
\draw [->,line width=2.pt] (-4.,-4.) -- (8.,-4.);
\draw [->,line width=2.pt] (-4.,-4.) -- (-4.,8.);
\begin{scriptsize}
\draw [fill=black] (-2.,-2.) circle (2.5pt);
\draw[color=black] (-2.23792,-2.5) node {(0,0)};
\draw [fill=black] (6.,-2.) circle (2.5pt);
\draw[color=black] (6.41358,-2.5) node {(1, 0)};
\draw [fill=black] (6.,6.) circle (2.5pt);
\draw[color=black] (6.33372,6.52261) node {(1,1)};
\draw [fill=black] (-2.,6.) circle (2.5pt);
\draw[color=black] (-2.18468,6.49599) node {(0,1)};
\draw [fill=black] (-2.,2.) circle (2.5pt);
\draw[color=black] (-2.8,2.13031) node {(0, $\frac{1}{2}$)};
\draw [fill=black] (6.,2.) circle (2.5pt);
\draw[color=black] (6.94598,2.10369) node {(1, $\frac{1}{2}$)};
\draw [fill=ududff] (2.,0.) circle (2.5pt);
\draw[color=ududff] (2.64685,0.54642) node {$\Omega_1$};
\draw [fill=ududff] (2.,4.) circle (2.5pt);
\draw[color=ududff] (2.64685,4.56604) node {$\Omega_2$};
\draw [fill=ududff] (-4.,-4.) circle (0.5pt);
\draw [fill=ududff] (8.,-4.) circle (0.5pt);
\draw[color=ududff] (8.19712,-3.72609) node {x};
\draw [fill=ududff] (-4.,8.) circle (0.5pt);
\draw[color=ududff] (-3.8085,8.27953) node {y};
\end{scriptsize}
\end{tikzpicture}
	\caption{Spatial domains when variables are partitioned by location in the numerical grid.}
	\label{fig:grayscott_spatial_partition}
\end{figure}
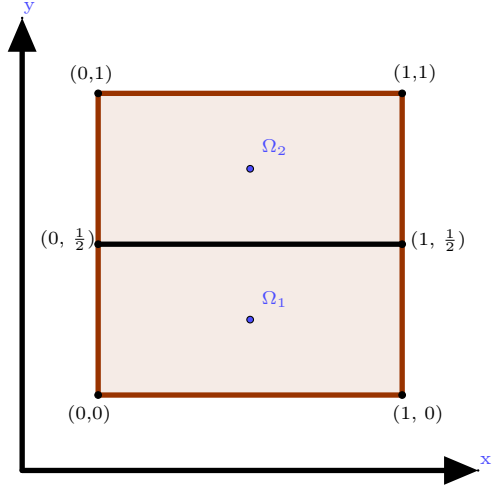

If $\mathbf{P}$ is the permutation matrix which performs the rearrangement of the state variables, the  variables corresponding to this splitting scheme are $v=(\mathbf{P}^\top \bar{Y}) _{1 : N/2}$, $w=(\mathbf{P}^\top \bar{Y}) _{N/2 + 1 : N}$,
%
%
and the corresponding partition is applied to the right hand side function \eqref{eqn:GS-full-state}.
Lastly, if $\mathbf{J}$ is the Jacobian of the right hand side, then the linear operators are defined as
$\Lone=(\mathbf{P}^{\top} \mathbf{J} \mathbf{P})_{1:N/2, 1:N/2}$ and $\Ltwo=(\mathbf{P}^{\top} \mathbf{J} \mathbf{P})_{N/2+1:N, N/2+1:N}$.
%

Figure \ref{fig:gray-scott-results-space-partitioned} shows the results of fixed timestep experiment for the scenario where the discretized model is partitioned by spatial domain. We note that all the methods show the theoretical orders of convergence.
\begin{figure}[htb!]
	\centering
    \includegraphics[scale=0.4]{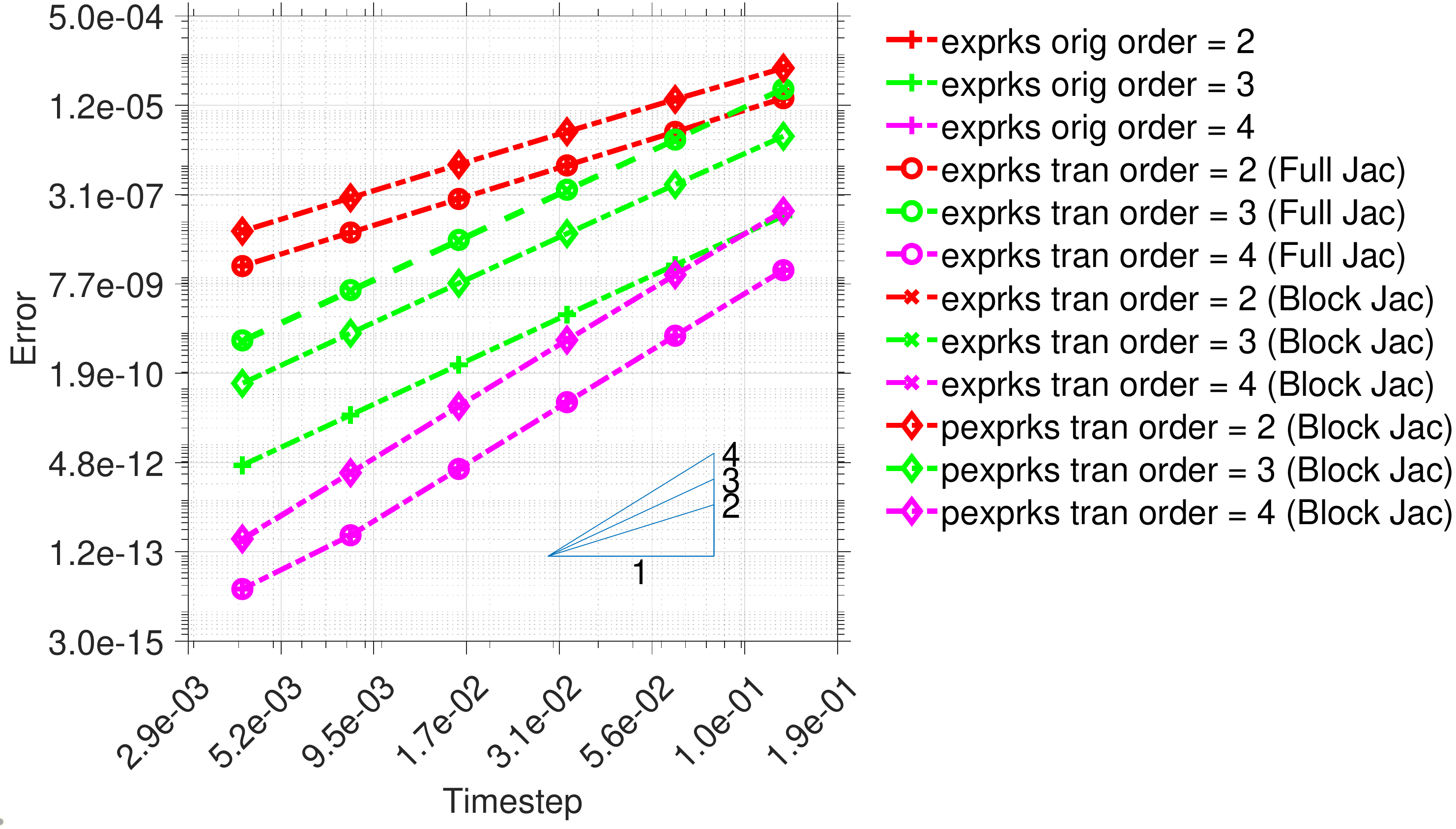}
    \caption{Gray-Scott equations \eqref{eqn:gray-scott}  partitioned by spatial domains on a 300x300 grid, as in Figure \ref{fig:grayscott_spatial_partition}.}
    \label{fig:gray-scott-results-space-partitioned}
\end{figure}

\subsection{Additive partitioning by physical process}

Next, we consider an additive partitioning of the Gray-Scott system. The right-hand side of the PDE \eqref{eqn:gray-scott} models diffusion of two chemical species, $A$ and $B$, in a medium and the interaction between species via reaction terms. For this experiment, we split the system along these two distinct physical processes:
\begin{subequations}
\label{eqn:GS-process-split}
\begin{equation}
\fone(\bar{a},\bar{b}) = \f^{\textnormal{diffusion}} = \begin{bmatrix}
d_a * \mathds{L} * \bar{a} \\
d_b * \mathds{L} * \bar{b}
\end{bmatrix},
\end{equation}
and,
\begin{equation}
\ftwo(\bar{a},\bar{b}) = \f^{\textnormal{reaction}} = \begin{bmatrix}
- \bar{a} \odot \bar{b} \odot \bar{b} + f * (\mathds{1} - \bar{a})\\
+ \bar{a} \odot \bar{b} \odot \bar{b} - (f + k) * \bar{b}
\end{bmatrix}.
\end{equation}
The linear operators corresponding to these right-hand sides are:
\begin{equation}
\Lone = \Lb^{\textnormal{diffusion}} = \begin{bmatrix}
d_a * \mathds{L} & \mathbf{0}\\
\mathbf{0} & d_b * \mathds{L}
\end{bmatrix},
\end{equation}
and
\begin{equation}
\Ltwo = \Lb^{\textnormal{reaction}} =\begin{bmatrix}
- \textnormal{diag}(\bar{b} \odot \bar{b}) - f * \mathds{I} & -2 * \textnormal{diag}(\bar{a} \odot \bar{b})\\
\textnormal{diag}(\bar{b} \odot \bar{b}) & 2 * \textnormal{diag}(\bar{a} \odot \bar{b}) - (f + k) * \mathds{I}
\end{bmatrix}.
\end{equation}
\end{subequations}
Linear algebra takes advantage of the highly regular block structure of $\Lone$ and $\Ltwo$.

Figure \ref{fig:gray-scott-results-phy-partitioned} shows the results of fixed timestep experiment for the above scenario where the discretized model is partitioned by physics, viz., reaction and diffusion. Again, we get full convergence in all the partitioned methods.

\begin{figure}[htb!]
	\centering
    \includegraphics[scale=0.4]{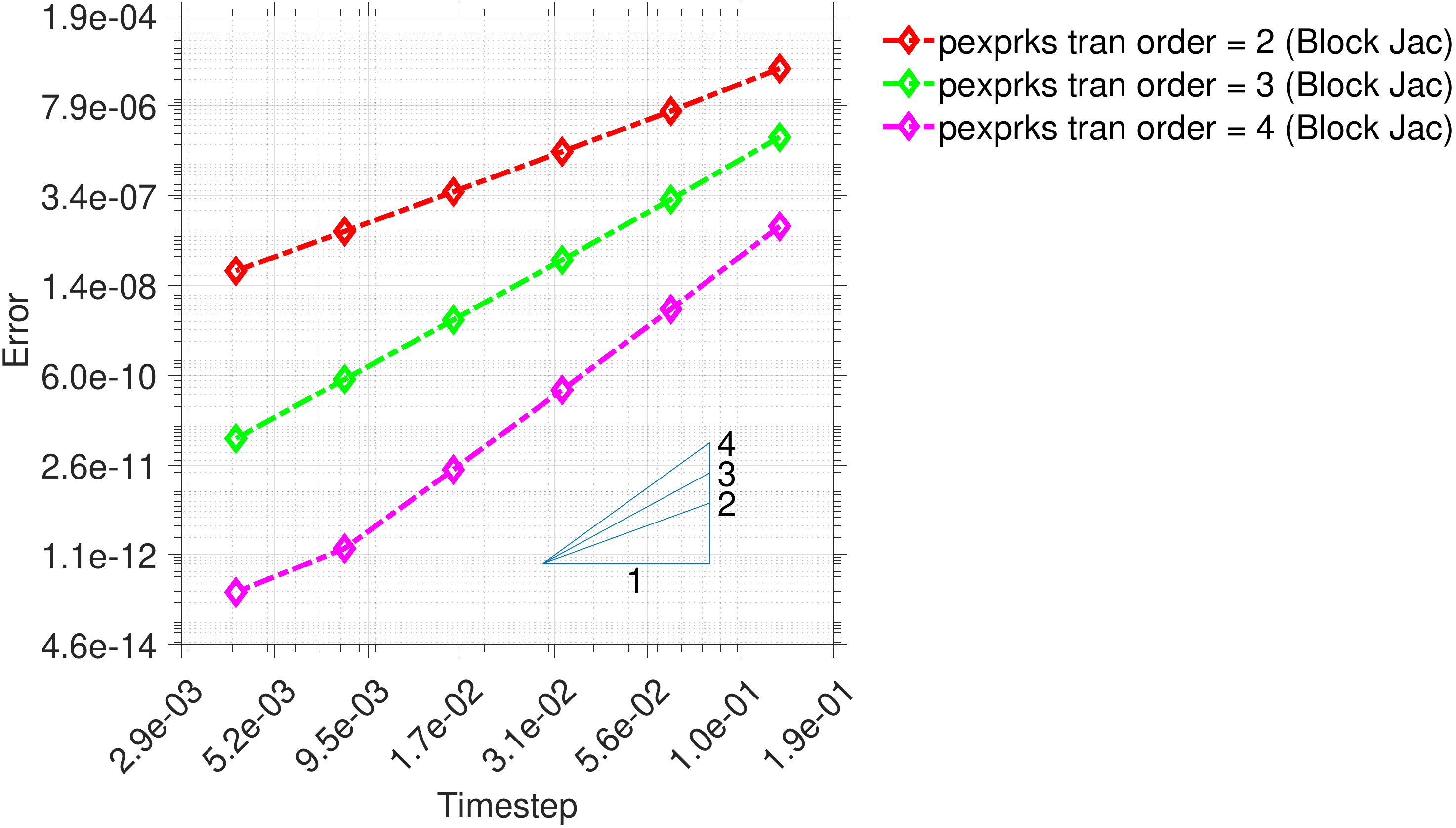}
    \caption{Gray-Scott equations \eqref{eqn:gray-scott} partitioned by physics \eqref{eqn:GS-process-split}.}
    \label{fig:gray-scott-results-phy-partitioned}
\end{figure}

\subsection{Partitioning by physics with reaction treated explicitly, and diffusion treated exponentially}

We note that, for our settings, the reaction terms in the Gray-Scott model \eqref{eqn:gray-scott} are less stiff compared to the diffusion terms. Consequently, we test our partitioned exponential methods with an ``IMEX-like'' configuration \cite{Ascher_1995_IMEX_LMM,Ascher_1997_IMEX_RK}. Specifically, we treat the stiffer diffusion partition exponentially, and the less stiff reaction partition explicitly. We make one minor modification to the previous experiment to implement this. While we define $\fone$, $\ftwo$ and $\Lone$ similarly to the previous experiment, we set $\Ltwo = \mathbf{0}$.

When we set the linear operator to a zero matrix, an Exponential Runge--Kutta method degenerates to the underlying Runge--Kutta method \cite{Hochbruck_2005_expRK}. Likewise, by setting the linear operator of any one partition of a PEXPRK to a zero matrix, we obtain an explicit method in that partition.

Figure \ref{fig:gray-scott-results-phy-explicit-explonential-partitioned} shows the results of fixed timestep experiment for the scenario where the discretized model is partitioned by physics, i.e., into reaction and diffusion, with the reaction partition treated explicitly and the diffusion partition exponentially. We observe the theoretical convergence behavior in all the partitioned methods. 

\begin{figure}[htb!]
	\centering
    \includegraphics[scale=0.4]{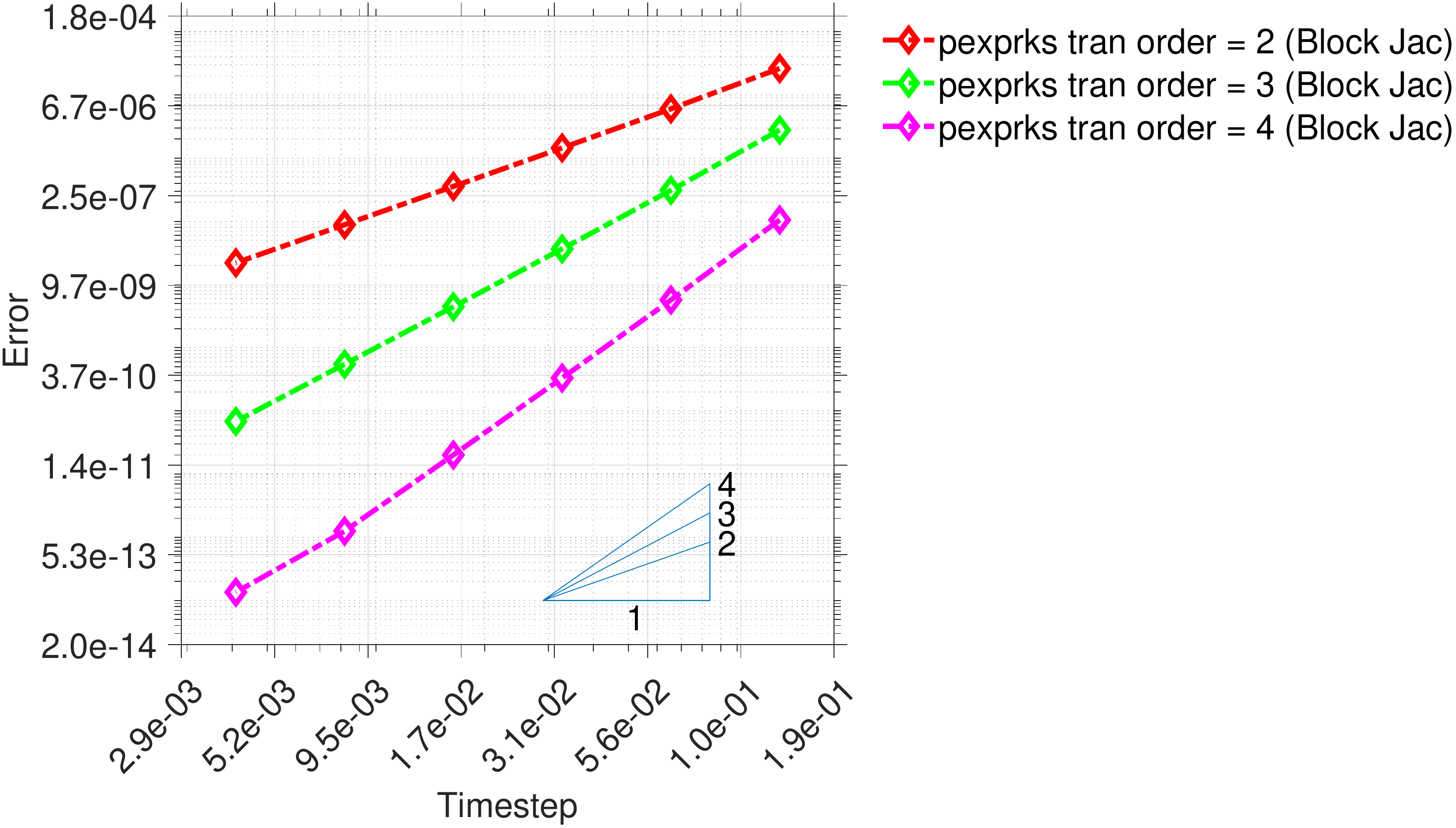}
    \caption{Gray-Scott equations \eqref{eqn:gray-scott} partitioned by physics with the reaction term treated explicitly, and the diffusion term treated exponentially.}
    \label{fig:gray-scott-results-phy-explicit-explonential-partitioned}
\end{figure}
%

\section{Conclusions}
\label{sec:conclusions}

In our earlier work \cite{narayanamurthi2019} we developed classical order conditions theory for partitioned methods that solve each component of a multiphysics problem with a different exponential method. We found that the partitioned methods were amenable to certain computational optimizations that unpartitioned methods could not take advantage of, and that the partitioned methods can be more stable than unpartitioned methods for some parameter settings. However, those methods are challenged by stiff systems with a tight coupling between components. 

In this work we develop an implementation of exponential Runge--Kutta methods that is specific to partitioned systems. The resulting partitioned exponential methods inherit the W stiff order conditions from the base methods \cite{luan2014,luan2014a,luan2014b,hochbruck2009}. A direct application of stiff exponential Runge--Kutta methods to partitioned systems is not immediate due to the properties of the matrix exponential operator and the non-linear terms. Our approach is to introduce a transformation for exponential Runge--Kutta methods that enables the derivation of partitioned methods of that type. We apply the modified methods to component and to additively partitioned systems, using a special approximation of the Jacobian as the linear operator. This approximation allows to evaluate matrix functions on Jacobians of individual  component functions rather than on the combined Jacobian. These matrix functions are combined to obtain the numerical solution. The resulting partitioned exponential methods are of exponential-exponential and exponential-explicit type.

We derive three methods of second, third and fourth order of partitioned exponential type and numerically demonstrate that they converge as expected for different types of partitions of the test problem.  

\section*{Acknowledgements}

This work has been supported in part by NSF through awards NSF ACI–1709727 and NSF CCF–1613905, AFOSR through the award AFOSR DDDAS 15RT1037, and by the Computational Science Laboratory at Virginia Tech.

\section*{References}

\end{document}